
\documentclass[preprint,10pt]{elsarticle}




\usepackage{amssymb}
\usepackage{amsfonts, amsmath}
\usepackage{graphicx}
\usepackage{tikz}
\graphicspath{ {./images/}}
\usepackage{caption}
\usepackage{subcaption}
\usepackage{amssymb}
\usepackage{amsthm}
\usepackage{float}

\newtheorem{theorem}{Theorem}[section]

\newtheorem*{remark}{Remark}
\newtheorem{proposition}{Proposition}

\usepackage{varwidth}
\usepackage[linesnumbered,ruled,vlined]{algorithm2e}
\DeclareCaptionFormat{myformat}{
  \begin{varwidth}{\linewidth}%
    \centering
    #1#2#3%
  \end{varwidth}%
}

\usepackage{geometry} 
\geometry{margin=1.25in} 

\journal{Journal of Approximation Theory}

\begin{document}

\begin{frontmatter}

\title{Deep Univariate Polynomial and Conformal Approximation}

\author[uchic]{Kingsley Yeon}

\affiliation[uchic]{organization={Department of Statistics and CCAM, University of Chicago}, 
            city={Chicago},
            postcode={60637}, 
            state={IL},
            country={USA}}

\begin{abstract}
A \emph{deep} approximation is an approximating function defined by composing more than one \emph{layer} of simple functions. We study deep approximations of functions of one variable using layers consisting of low-degree polynomials or simple conformal transformations. We show that deep approximations to $|x|$ on $[-1,1]$ achieve exponential convergence with respect to the degrees of freedom. Computational experiments suggest that a composite of two and three polynomial layers can give more accurate approximations than a single polynomial with the same number of coefficients. We also study the related problem of reducing the Runge phenomenon by composing polynomials with conformal transformations.

\end{abstract}

\begin{keyword}
Composite polynomial \sep Kolmogorov-Arnold representation theorem\sep pth root \sep conformal map \sep function approximation \sep neural network approximation

\end{keyword}

\end{frontmatter}



\section{Introduction} \label{sec:intro}

It is commonly believed that the power of deep 
neural nets comes from composing ``layers'' of mathematical ``neurons''\cite{Devore}.
Each individual neuron is a relatively simple function.
Functions of many variables that solve practical optimality conditions seem
to be well approximated by compositions of such functions.
Is this true in the simpler situation of functions of one variable?
Is composing relatively simple univariate functions a powerful way to 
approximate important target functions?

We give some numerical experiments to compare approximations of function with {\em deep} polynomials and traditional linear least squares fixing the same degrees of freedom as well as analysis with 
elementary conformal transformations to show that composition can be 
a powerful way to build accurate approximations.

A {\em deep} polynomial is a polynomial that is created by composing two or more polynomials, 
which we refer to as layers. Deep univariate polynomial approximation has difficulties incommon with deep neural nets. One such difficulty 
is overcoming multiple local minima during {\em training} for effective optimization. The trial space 
of composite polynomials of fixed degrees is non-linear, so even the problem of least squares 
approximation can have multiple local minima similar to those of the complex landscapes in deep neural networks \cite{complex_land}. The Hessian of the objective function at a local or 
global minimum can be ill-conditioned, which suggests that the response surface is nearly flat in 
certain directions, and polynomials that are not close will give nearly identical approximation accuracy.

We present numerical experiments that show, for example, that the best approximation 
of a certain Bessel function by a composite of two degree four polynomials is much better
than the best approximation by a single polynomial with the same number
of ``tunable'' coefficients (degree seven, as explained below in Figure \ref{fig:bessel10}).

The reason for the success of deep approximations is a mystery. It is well known that composite polynomial form a small subclass of polynomial with the same degree \cite{Ritt, Rickards}.
To be sure, some functions $f$ are better approximated by a single 
polynomial of degree nine than by a composite of two polynomials of degree five.
For example, $f$ itself may be a polynomial of degree nine
that cannot be represented as a composite of two degree five polynomials. Equally important is the reverse case: a two-layer polynomial may be inexpressible by a single-layer polynomial with the same number of parameters. 
It may be that special functions like Bessel functions have good deep approximations because of their behavior in the complex plane. 

Gawlik and Nakatsukasa \cite{Gawlik} investigated a related concept involving the composition of rational functions to approximate the pth root ($x^{1/p}$) on the interval [0,1]. This approach demonstrated a doubly exponential convergence rate with respect to the degrees of freedom. Their work drew inspiration from and capitalized upon the notion of the recursive optimality of the Zolotarev function. This function yields the minimax rational approximant for square root and sign functions, as detailed in \cite{Gawlik_sqrt}. For the inverse pth root, deep polynomials exhibit exponential convergence with respect to the degrees of freedom, this includes the important subcase of $|x|$,  Section \ref{sec:pthroot}.

Section \ref{sec:conformalmap} explores composite approximations $f(x) \approx q(p(x))$
where $p$ is chosen as analytic mapping rather than a polynomial.
Clearly, the property of being well approximated in that sense is invariant under
conformal changes of variables. The section also studies the possibility of avoiding the 
Runge phenomenon in polynomial approximation using conformal maps as 
preconditioners.
The Runge phenomenon is the failure of high interpolation using high degree
polynomials on uniformly spaced points, even when the function $f$ is being interpolated
is analytic on the interval of interpolation.
This can be traced to singularities in $f$ in the complex plane close to the 
real interpolation interval.
The conformal preconditioner made those singularities further from the real interval
so that the polynomial in interpolates of the preconditioned functions converge.


\section{Deep polynomials, definition, training} \label{sec:dp}

A {\em deep} polynomial is a high-degree polynomial that is a composite of two or more lower-degree polynomials.
A degree $d$ polynomial of one variable takes the form
\[
      p(x) = a_d x^d + \cdots + a_0 \; .
\]
It is defined by $d+1$ coefficients $a_0$, $\ldots$, $a_d$.
We write $d = \text{deg}(p)$.
In principle, we should require that $a_d \neq 0$
but none of our numerical experiments do this explicitly.
Let $g$ be the composite polynomial
\[
    g(x) =  \left(q\circ p\right)(x) = q(p(x)) \; .
\]
The composite has degree $\text{deg}(g) = \text{deg}(p)\,\text{deg}(q)$.

A deep polynomial with $L$ layers is an $L$-fold composite:
\begin{equation} \label{pc}
       g(x) = p_1\circ p_2 \circ \cdots \circ p_L(x) \; .
\end{equation}
The {\em layers} are polynomials $p_k$ with degrees $d_k = \text{deg}(p_k)$.
The deep polynomial degree is the product of the degrees of the layers: 
\begin{equation}   \label{deg}
    \text{deg}(g) = D = d_1\cdot \;\cdots\;\cdot d_L \; .
\end{equation}
The composite degree $D$ can be large while the layer degrees, $d_k$, are moderate.
Subsection \ref{sec:normalization} shows that the number of free parameters defining 
$g$ in \eqref{pc} is $N=d_1+\cdots+d_L-L+2$.
This is usually much smaller than $D+1$, which is the number of free parameters
defining a general polynomial of degree $D$.
The general polynomial of degree $D$ cannot be expressed as a composite
with degrees $d_k$ if $N < D+1$.
Thus, the set of deep polynomials with degrees $d_k$ is a ``thin'' 
and nonlinear subset of the set of all polynomials of degree $D$.

We consider the problem of approximating a target function $f$ by a deep composite $g$
in the least squares sense on the interval $[-1,1]$.
The loss function for this is
\[
      F(p_1,\cdots,p_L) = \frac{1}{2}\int_{-1}^1 \left[ f(x) - g(x) \right]^2dx \; .
\]
Minimizing $F$ is not a linear least squares problem because the coefficients
of $p_k$ occur in the composite $g$ in a nonlinear way.

We describe this more concretely for $L=2$ layers.
For those, we use simpler notation.
The two layers of $g(x) = q(p(x))$ have degrees $d=\text{deg}(q)$
and $e=\text{deg}(p)$.
The coefficients are called $b_j$ and $a_j$:
\begin{align} 
    q(y) &= b_d y^d + \cdots + b_0       \label{q} \\
    p(x) &= a_e x^e + \cdots + a_0       \label{p} \; .
\end{align}
The loss function is
\begin{equation}  \label{F}
      F(a_0,\cdots,a_e,b_0,\ldots,b_d) = \frac{1}{2}\int_{-1}^1 \left[ \,f(x) - q(p(x))\,\right]^2 dx \;.
\end{equation}
The best $L^2$ approximation error is
\begin{equation}  \label{R}
    R_{de}(f) = 
        \min_{a_0,\ldots,b_d} \sqrt{2 F(a_0,\cdots,a_e,b_0,\ldots,b_d)}
\end{equation}
Figure \ref{fig:bescomerr} has a plot of $R_{de}$ for a specific target function $f$
as a function of $d$, with $d+e=D=10$ fixed.
The ``shallow'' approximations with $e=0$ or $d=0$ are seen to be much worse than
deep approximations with $d=e=5$.

\subsubsection{A Note on Chebyshev Basis}   
Switching from the monomial basis to the Chebyshev basis does not help in this context. For an approximation \( q(p(x)) \) on \([-1,1]\), this change would apply to \( p \) but not to \( q \), because the domain \( q \) needs to live on is unknown. Using the Chebyshev basis does not appear to simplify the optimization or improve performance in this setting.


\subsection{The gradient} \label{sec:grad}

We evaluate the loss function integral \eqref{F} and $L_2$ approximation error \eqref{R} using Gauss Legendre quadrature with $m=100$ points and weights.
The same points and weights are used for all integrals and are just beyond the minimum
needed for all such integrals in our experiments to be ``exact in exact arithmetic''.
The {\tt scipy} (version {\em 1.6.2}) routine {\tt scipy.special.rootslegendre}
was used to compute the points and weights.

The gradient of the loss function was calculated ``exactly'' using Gauss Legendre
quadrature applied to formulas that come from \eqref{F} by direct differentiation
using the definitions \eqref{p} and \eqref{q}. (i.e. exact in machine precision if $f$ is a polynomial and very well for analytic $f$):
\begin{equation}  \label{gradb}
	\frac{\partial F}{\partial b_j} 
	     = - \int_{-1}^1 \left[\,f(x) - q(p(x))\,\right]
	                    \cdot\left[\, p(x)\,\right]^j\,dx \; .
\end{equation}
\begin{equation}  \label{grada}
	\frac{\partial F}{\partial a_k}  
	   = - \int_{-1}^1 \left[\, f(x)-q(p(x))\,\right] 
		\cdot q'(p(x))\cdot x^k \,dx \; .
\end{equation}
The integrands of \eqref{gradb} and \eqref{grada} namely $q(p(x))\cdot p(x)^j$, $q(p(x))\cdot q'(p(x))\cdot x^k$ are polynomials whose degrees are 
at most $2de$.


\subsection{Normalization}      \label{sec:normalization}
The deep polynomial $g$ in \eqref{pc} does not uniquely determine the layer polynomials 
$p_m$.
For example, a composite of two degree 1 polynomials has degree 1.
If $p(x) = a_1x+a_0$ and $q(y) = b_1y+b_0$, then 
\[
     q(p(x)) = c_1 x + c_0 \; , \; \text{ with } c_1 = a_1b_1 \text{ and } c_0 = a_1b_0+a_0 \;.
\]
The four parameters defining $p$ and $q$ determine just two parameters, $c_1$ and $c_0$,
that define $g$.
Thus the optimal layer polynomials $p$ and $q$ are not unique even if the 
optimal deep polynomial $q(p(x))$ is unique.

We address this issue by showing that any composite $g=q\circ p$ may be achieved
as a composite $p = \widetilde{q}\circ \widetilde{p}$ so that the leading and trailing
coefficients of $\widetilde{p}$ are $\widetilde{a}_e = 1$ and $\widetilde{a}_0 = 0$.
The normalization fixes the two parameters $a_e$ and $a_0$.
This reduces the number of free parameters from $d+1+e+1=d+e+2$ to just $d+e$.
In the $d=e=1$ example, we can achieve any $g(x) = c_1x+c_0$ using $p(x) = x$
(i.e., $a_1=1$ and $a_0=0$) and $q(y) = c_1y+c_0$.

We found that the optimization worked much better with the normalization than without.
We replaced the un-normalized problem \eqref{R} with the equivalent normalized problem. For $L=2$ this is:
\begin{equation} \label{On}
      \min_{a_1,\cdots,a_{e-1},b_0,\cdots b_d} F(0,a_1,\cdots,a_{e-1},1, \; b_0,\cdots,b_d) \; .
\end{equation}
The expression \eqref{F} for $F$ and the gradient formulas \eqref{gradb} and
\eqref{grada} are unchanged.
The deflation strategy we use to avoid local minima relies on normalization.
Deflation need not and does not in our experiments, escape from degenerate local minima without normalization.

The result for the $L=2$ case above is Lemma 1 below.
The corresponding result for deeper $L>3$ composites is in Theorem 2.
The proof of Theorem 2 is an induction using Lemma 1.\\

\paragraph{Lemma 1}
Let $q(x), p(x)\in\mathbb{F}[x]$. Let $g(x) = q(p(x))$. Then there exists $\widetilde{q}(x), \widetilde{p}(x)$, such that $g = \widetilde{q}(\widetilde{p}(x))$, and $\widetilde{p}(x) = x^e + .... + \widetilde{a}_1x$, where $e = \text{deg}(p)$.

\paragraph{Proof}
Let $p(x) = a_ex^e + ... + a_0$, where $a_e \neq 0$. Then, define $p_1(x) = \frac{p(x)}{a_e}$. It is clear that we have:
\begin{equation*}
    \begin{split}
        g(x) &= q(p(x))\\
        &= \sum^d_{i=0}b_i(p(x))^i\\
        &= \sum^d_{i=0}b_i(a_ep_1(x))^i\\
        &= \sum^d_{i=0}b_ia^i_e(p_1(x))^i\\
    \end{split}
\end{equation*}
Thus, by defining $$q_1(y) = \sum^d_{i=0}b_ia^i_ey^i$$ we have $q_1(p_1(x)) = g(x)$. Now let $\widetilde{p}(x) = p_1(x) - \frac{a_0}{a_e}$. It is clear that $\widetilde{p}(0) = 0$. Then $p_1(x) = \widetilde{p}(x) + \frac{a_0}{a_e}$. Thus:
\begin{equation*}
    \begin{split}
        g(x) &= q_1(p_1(x))\\
        &= \sum^d_{i=0}b_ia^i_e \left(\widetilde{p}(x) + \frac{a_0}{a_e} \right) ^i\\
        &= \sum^d_{i=0}b_ia^i_e \left(\sum^i_{j=0}\binom{i}{j}\widetilde{p}(x)^j \left( \frac{a_0}{a_e} \right)^{i-j} \right)\\
        &= \sum^d_{j=0} \left(\sum^d_{i = j}b_ia^i_e\binom{i}{j} \left(\frac{a_0}{a_e}\right)^{i-j}\right)\widetilde{p}(x)^j\\
    \end{split}
\end{equation*}
By defining $$\widetilde{q}(y) = \sum^d_{j=0} \left(\sum^d_{i = j}b_ia^i_e\binom{i}{j} \left(\frac{a_0}{a_e}\right)^{i-j}\right)y^j$$ we have $\widetilde{q}(\widetilde{p}(x)) = g(x)$.
\qedsymbol

\paragraph{Remark}
It is not hard to see that $\text{deg}(\widetilde{q}) = \text{deg}(q)$, since $\text{deg}(g) = \text{deg}(q)\text{deg}(p) = \text{deg}(\widetilde{q})\text{deg}(\widetilde{p})$, and that $\text{deg}(p) = \text{deg}(\widetilde{p}) = e$.

\paragraph{Theorem 2} 
Let $p_i(x)\in \mathbb{F}[x]$, for $2\leq i\leq N$. Let $g(x) = p_1\circ p_2\circ...\circ p_N(x)$. Then there exists $\widetilde{p}_1,...,\widetilde{p}_N$, such that $g(x) = \widetilde{p}_1\circ \widetilde{p}_2\circ...\circ \widetilde{p}_N(x)$, $deg(p_i) = deg(\widetilde{p}_i)$, for all $1\leq i\leq N$, and $\widetilde{p}_j(x)$ has leading coefficient $1$ and constant term $0$, for all $2\leq j\leq N$.

\paragraph{Proof}
We prove by induction on $N$. The case $N = 2$ is proven in Lemma 1. Assuming the conclusion holds for all $N-1$ layers of composition, we try to prove it holds for $N$ layers. Let $p(x) = q_2\circ q_3\circ...\circ q_{N}(x)$. By inductive assumption, there exists $p_2,...,p_N$, such that $deg(p_i) = deg(q_i)$,  for all $2\leq i\leq N$, and $p_j(x)$ has leading coefficient $1$ and constant term $0$, for all $3\leq j\leq N$. Now let $y = p_3\circ...\circ p_{N}(x)$. Then we have $q_1(p(x)) =q_1(p_2\circ p_3\circ...\circ p_{N}(x)) =q_1(p_2(p_3\circ...\circ p_{N}(x)))=q_1(p_2(y))$. By the result of Lemma 1, we know there exists $\widetilde{q}_1, \widetilde{q}_2$, such that $deg(\widetilde{q}_1) = deg(q_1)$, $deg(\widetilde{q}_2) = deg(p_2)$, $q_1(p_2(y)) = \widetilde{q}_1(\widetilde{q}_2(y))$, and that $\widetilde{q}_2$ has leading coefficient $1$ and constant term $0$. Thus, take $\widetilde{q}_j = p_j$, for all $3\leq j\leq N$, we have $g(x) = \widetilde{q}_1\circ \widetilde{q}_2\circ...\circ \widetilde{q}_N(x)$, $deg(q_i) = deg(\widetilde{q}_i)$, for all $1\leq i\leq N$, and $\widetilde{q}_j(x)$ has leading coefficient $1$ and constant term $0$, for all $2\leq j\leq N$.
\qedsymbol

\begin{proposition}
If $p(x) = x^e + ... + a_1x$, i.e monic and zero constant-term, and $q(x) = b_dx^d + ... + b_0$, then $q(p(x))=g(x)$ and $q,p$ uniquely determined by $g$.  
\end{proposition}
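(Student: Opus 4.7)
The plan is to argue uniqueness directly: suppose there are two normalized factorizations $g = q_1 \circ p_1 = q_2 \circ p_2$ with each $p_i$ monic of degree $e$ satisfying $p_i(0) = 0$ and each $q_i$ of degree $d$, and then show $p_1 = p_2$ and $q_1 = q_2$ purely by degree-counting on the composite.

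First I would recover the leading coefficient of $q_i$. Because $p_i$ is monic of degree $e$, the leading coefficient of $q_i(p_i(x))$ is simply the leading coefficient of $q_i$; hence $b_d^{(1)} = b_d^{(2)} =: b_d$, which is nonzero since $\deg q_i = d$. Then I would observe that
\[
    g(x) - b_d\,p_i(x)^d \;=\; \sum_{j=0}^{d-1} b_j^{(i)}\, p_i(x)^j ,
\]
so this polynomial has degree at most $(d-1)e$. Subtracting the two such identities gives $b_d\bigl(p_2^d - p_1^d\bigr)$ of degree at most $(d-1)e$.

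Next I would use the factorization $p_2^d - p_1^d = (p_2 - p_1)\cdot S$ with $S = \sum_{k=0}^{d-1} p_1^k\, p_2^{d-1-k}$. Since each $p_i$ is monic of degree $e$, every summand of $S$ is monic of degree $(d-1)e$, so the leading term of $S$ is $d\, x^{(d-1)e}$ and in particular $\deg S = (d-1)e$ exactly. Combined with the degree bound on the product, this forces $\deg(p_2 - p_1) \leq 0$, so $p_2 - p_1$ is a constant; the normalization $p_1(0) = p_2(0) = 0$ then pins that constant to $0$, giving $p_1 = p_2 =: p$. Finally, $q_1(p(x)) = q_2(p(x))$ for all $x$, and since $p$ is nonconstant its image is infinite, so $q_1$ and $q_2$ agree at infinitely many points and must coincide as polynomials.

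The main obstacle worth flagging is the degree-counting step: one must be careful that $S$ really has degree exactly $(d-1)e$ with nonzero leading coefficient $d$, because this is precisely what converts the degree bound on $p_2^d - p_1^d$ into a bound on $p_2 - p_1$. This uses $d \geq 1$ together with the monicness of the $p_i$; the argument degenerates only in the trivial case $d = 0$, where $q$ is constant and $p$ is genuinely not determined by $g$, which lies outside the scope of the proposition.
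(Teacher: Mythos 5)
Your proof is correct, but it is genuinely different from---and substantially more complete than---what the paper offers. The paper's entire justification for this proposition is the single sentence that uniqueness ``follows from the constructive proof of Theorem 2,'' i.e.\ it leans on the normalization construction of Lemma~1 (dividing by the leading coefficient $a_e$ and absorbing the constant term $a_0$ into the outer layer). That construction establishes \emph{existence} of a normalized factorization, but by itself it does not rule out two distinct normalized pairs composing to the same $g$; the paper leaves that step implicit. Your argument supplies exactly the missing piece: comparing leading coefficients to identify $b_d$, subtracting $b_d p_i^d$ to get the degree bound $(d-1)e$, factoring $p_2^d - p_1^d = (p_2 - p_1)\sum_{k=0}^{d-1} p_1^k p_2^{d-1-k}$, and using monicness to see the cofactor has degree exactly $(d-1)e$ with leading coefficient $d$, which forces $p_2 - p_1$ to be the constant $0$ by the normalization $p_i(0)=0$; the identity $q_1 \circ p = q_2 \circ p$ with $p$ nonconstant then gives $q_1 = q_2$. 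What the paper's route buys (if spelled out) is economy---it reuses machinery already proved; what your route buys is an actual self-contained uniqueness proof by degree counting, which is what the proposition needs. Two small caveats to note: your argument tacitly assumes both factorizations share the same degree pattern $(d,e)$, which is the right reading of the statement (uniqueness fails across different degree splits, e.g.\ $x^4 = (x^2)\circ(x^2) = (x^4)\circ x$); and the nonvanishing of the leading coefficient $d$ of $S$ uses characteristic zero, which is harmless here since the paper works over $\mathbb{R}$, though the paper states its lemmas over a generic $\mathbb{F}[x]$.
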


We can see that the uniqueness proof follows from the constructive proof of Theorem 2.


\subsection{Optimization}      \label{sec:opt}

We use a somewhat ad-hoc hybrid optimization strategy that suffices for 
the computational experiments reported in Section \ref{sec:results}.
More sophisticated methods would be necessary to solve harder problems.
For example, one could use better-conditioned bases for polynomial spaces,
problem-specific preconditioning, an analytic Hessian matrix, etc.

\subsubsection*{Local optimization}

We compute $F$ from \eqref{F} and $\nabla F$ using Gauss Legendre quadrature 
as described in Subsection \ref{sec:grad}.
We use the optimization routine {\tt optimize.minimize(method=’BFGS’)} from {\tt scipy} version {\em 1.6.2}.
This implements a BFGS strategy that takes advantage of the analytic $\nabla F$.
Taking a convergence criterion intended to get the most accuracy possible
from the strategy, following the default successful termination criteria to be that gradient norm is less than gtol= ($10^{-12}$). We find that this tolerance is a good balance between the number of BFGS iterations and the accuracy of the solution. 

The next phase is a simple Newton strategy using a Hessian matrix estimated from 
the analytic $\nabla F$ which is computed with centered finite differences.
We use the BFGS output as the initial guess for simple Newton. Our affine invariant stopping criterion  is when $v^THv < 10^{-14}$, $v= H^{-1}\nabla F$, i.e $v$ is defined to be the Newton step and $H$ being the Hessian. This is observed to improve the results from the BFGS routine. 

The condition number of the finite difference Hessian is found to be quite large,
which explains the ability of the affine invariant Newton strategy to improve over
the BFGS routine, which is more affected by conditioning.

The optimization strategy described above is highly sensitive to the initial condition, we use random initial guesses drawn from a Gaussian distribution, $N(0,1)$. Following $n$ runs, the result is selected with the least $L_2$ error \eqref{F}. 

The problem of visiting the same local minima (a common occurrence) is tackled using the {\em deflation} strategy. 

The optimization is done accurately enough to convince us that we are indeed finding distinct local minima, which are significantly different such that it impacts the quality of the final approximation.

\subsubsection*{Deflation}

The deflation algorithm (see algorithm \ref{algo:def}) uses knowledge of previously found minima and constructs an iteration that avoids it while finding a new local minimizer. An explicit demonstration of the deflation algorithm is provided in Subsection \ref{sec:deflation_example}. Keeping the same notation as \cite{farrell}, we briefly review the algorithm. Deflation can be seen as an adaptation of Newton's method, where the gradient of the problem is altered. A deflation operator

\begin{equation}  \label{deflation op}
      M_{\alpha,\beta}(u;r)\equiv \frac{I}{||u-r||^\alpha} + \beta I,
\end{equation}
is defined to act on \(\nabla F(x)\), where $u = (a_1,\cdots,a_{e-1},b_0,\cdots b_d)$ is the input coefficients, $r$ is the known solution we are deflating from, $I$ is the appropriate identity matrix and $||\cdot||$ is the Euclidean vector norm, $\alpha\in \mathbb{R} \geq 1$ is the exponentiated-norm and $\beta \geq 0$ is the shift. The shift term is to ensure that the norm of the deflation residual doesn't go to zero when $||u-r|| \rightarrow \infty$ as $u \rightarrow \infty$ which happens in practice. One can easily see that the deflation operator does not introduce new zeros. Let

\begin{equation}  \label{deflation gradient}
      G(u) = M_{\alpha,\beta}(u;r)\nabla F(a_1,\cdots,a_{e-1},b_0,\cdots b_d)
\end{equation}

\begin{equation}  \label{deflation hessian}
      K(u) = D[G(u)]
\end{equation}
which is the derivative operator $D$ of the vector-valued function $G(u)$. In practice, $K$ is computed by center finite difference. We avoid calling $G$ the gradient and $K$ the Hessian as they are altered by the deflated operator and need not be. The deflation iteration is defined as 

\begin{align}  
      x_{k+1} &= x_k - s*p_k  \label{deflation newton}  \\ 
      K(x_k)p_k &= G(x_k) \label{deflation linear sys}
\end{align}
$s$ is the learning rate (usually set to 1), and the deflation iteration in \eqref{deflation newton} will require the solution to the linear system \eqref{deflation linear sys}. Matrix $K$ unsurprisingly also has a high conditioning number. We use the same affine invariant stopping criteria as our simple Newton which was described above.

The subsequent deflation operator is updated by multiplying the denominator of the first component in the first deflation operator \eqref{deflation op} by $||u-r_i||$ before raising it to $\alpha$. 


 Similar to the deflation paper \cite{farrell} our deflation algorithm is highly sensitive to the choice of the shift hyperparameters $\alpha, \beta$. We find that by varying $\alpha, \beta$, deflation can identify more solutions. 
 
 Finally, a brief overview of the deflation algorithm can be found in Algorithm \ref{algo:def}.

\begin{algorithm}[H]
\KwIn{$f$: function, $n$: number of inner coefficient, $j$: number of outer coefficient, nDef, eps, step, $\beta$: deflation parameter, $\alpha$: deflation parameter, perturb: a small number, init: initial condition, random: boolean)}
\KwOut{defCoeffs}
Check \linebreak
If random is true then set init to be samples from $N(0, 1)$
\linebreak
Otherwise, use inputted init\\
Feed init into scipy BFGS and its output to simple Newton\\
Construct deflation matrix $M_{\alpha, \beta}$ using $r_1$, the output of simple Newton, from formula \eqref{deflation op}\\
Compute $G$ and $K$ using formulas \eqref{deflation gradient}, \eqref{deflation hessian} \\
Solve linear system (\ref{deflation linear sys}) and start deflation iteration of (\ref{deflation newton}) with 
\[
x_0 = r_i + (\text{perturb}\approx 0.001)
\] \\
Feed output of deflation iteration into BFGS, and its output to simple Newton. This will return our new minimizer $r_{i+1}$\\
Update $M$ with new found minimizer $r_{i+1}$ and thus recompute new $G$ and $K$\\
Repeat steps 5 to 7, for desired number of nDef
\caption{Deflation: defmulti in deflation.py file}
\label{algo:def}
\end{algorithm}


\section{Composite approximation to the inverse \(p\)th root}
\label{sec:pthroot}

The absolute value function \(|x|\) has been a central focus in approximation theory. A key milestone in this area is Newman’s result, which established that rational functions can achieve root-exponential convergence \(O(e^{-\sqrt{n}})\) when approximating \(|x|\) \cite{Newman1964}, whereas polynomial approximations are limited to a slower convergence rate of \(O(1/n)\). We have demonstrated that composite polynomials, by leveraging their internal structure, can achieve exponential convergence \(O(e^{-n})\) with respect to their degrees of freedom when approximating the inverse \(p\)th root. This framework is not only applicable to \(x^{-1/p}\) but also extends to other functions, including \(|x|\), which we will explore in detail.  

Beyond its theoretical significance in polynomial and rational approximation, \(|x|\) plays an important role in computational mathematics and numerical computing. Through the identity \(\text{sgn}(x) = \frac{x}{|x|}\) for \(x \neq 0\), the sign function emerges as a key component in fundamental non-arithmetic operations, such as comparison and the max function. The comparison operation, given by \(\text{comp}(u, v) = \frac{1}{2}(\text{sgn}(u-v) + 1)\), is widely used in machine learning algorithms like support vector machines, where it plays a role in classification and data separation. Similarly, the max function, defined as \(\text{max}(u, v) = \frac{(u+v) + (u-v) \, \text{sgn}(u-v)}{2}\), is crucial in convolutional neural networks for feature extraction through max pooling. Turns out, the optimal (non-scalar multiplication) way to approximate \(\text{sgn}(x)\) is by constructing a composite polynomial, as done by Lee et al. \cite{Lee}.

The problem of representing functions like \(|x|\) naturally extends to a broader class involving non-integer powers. For \(q/p \in \mathbb{Q}\), the expression \((x^{1/p})^q\) provides a structured way to approximate more general function classes. Zhao and Serkh \cite{Zhao} expanded on this idea, showing that functions of the form  
\[
f(x) = \langle \sigma(\mu), x^\mu \rangle
\]
defined over \([0,1]\), where \(\sigma(\mu)\) is a distribution supported on \([a,b]\) with \(0 < a < b < \infty\), can be approximated with exponential efficiency using series of non-integer powers. These approximations have important applications, particularly in boundary integral equations (BIE), where domains with corners pose significant challenges \cite{Serkh}. In a similar spirit to how Newman's result on rational functions inspired the lightning method \cite{light}, we hope that composite polynomial approximations will serve as an effective tool for addressing functions with challenging singularities, including \(|x|\).

The direct approach of applying Newton's iteration for \(F(f) = f^p - x\) to compute \( f^* = x^{1/p}\) will yield a composite rational approximation to \(f^p\). Instead of Newman's root-exponential convergence rate, we now achieve exponential convergence with respect to the degrees of freedom. This result is by no means sharp; in fact, it has been shown that composite rational functions can attain doubly-exponential convergence \cite{Gawlik}.

I showed a manuscript of this paper to Nick Trefethen, who then contacted Jean-Michel Muller, an expert in elementary function approximation. Muller suggested that a deep polynomial approximation to \(\lvert x \rvert\) could be obtained by applying Newton iteration to the function \(\frac{x^2}{\sqrt{x^2}}\). Following this idea, we employ the well-known Newton iteration for the matrix inverse \(p\)th root \cite{Higham}:
\begin{equation} \label{e:newtoniter}
    F(f) \;=\; \frac{1}{f^p} \;-\; x, 
    \quad
    f_{k+1} \;=\; \frac{1}{p}\, f_k \Bigl[(p+1) - f_k^p\,x \Bigr],
    \quad
    x_0 = 1.
\end{equation}

\begin{theorem}[Exponential convergence of composite polynomial to \(\lvert x\rvert\)]
\label{thm:linConvCompPolynom|x|}
Consider the special case \(p=2\) and let \(x \in [-1,1]\), including \(x=0\). Define the iteration
\[
f_{k+1} 
\;=\; 
\tfrac12\, f_k \Bigl( 3 \;-\; x^2\, f_k^2\Bigr), 
\quad f_0 = 1.
\]
Then \(\{f_k\}\) converges linearly to \(\frac{1}{\lvert x\rvert}\) for \(x \neq 0\), and multiplying by \( x^2 \) gives us \(\tfrac{x^2}{\lvert x\rvert} = \lvert x\rvert\). For \(x=0\), the limit is trivially \(\lvert x\rvert = 0\). In particular, if we define 
\[
r_k \;=\; \lvert x\rvert\, f_k 
\quad\text{and}\quad 
E_k \;=\; 1 - r_k,
\]
then there exists a constant \(\alpha \le \tfrac58\) such that 
\[
E_{k+1} \;\le\; \alpha \, E_k,
\]
for all \(k\) beyond some finite index. Hence 
\(\bigl\{ f_k \bigr\}\) converges to \(\frac{1}{\lvert x\rvert}\) at a geometric (exponential) rate with ratio at most~\(\tfrac12\).
\end{theorem}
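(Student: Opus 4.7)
My plan is to eliminate the parameter $x$ from the iteration by passing to the normalized variable $r_k := |x|\,f_k$. Multiplying the iteration through by $|x|$ shows that $r_k$ satisfies the autonomous one-dimensional recurrence $r_{k+1} = \tfrac12 r_k(3 - r_k^2)$ with $r_0 = |x| \in [0,1]$. Convergence $|x|\,f_k \to 1$ (equivalently $x^2 f_k \to |x|$) is thus reduced to studying this scalar map near its fixed point $r = 1$. Substituting $r_k = 1 - E_k$ and expanding gives the exact error recurrence $E_{k+1} = \tfrac12 E_k^2(3 - E_k)$, which exhibits the quadratic Newton-type behavior near $E = 0$.

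The bound $\alpha \le 5/8$ then follows from a short monotonicity argument applied to the step ratio $E_{k+1}/E_k = \tfrac12 E_k(3 - E_k)$. This ratio is strictly increasing in $E_k$ on $[0,1]$, so any uniform upper bound on $E_k$ immediately controls it; evaluating at $E_k = 1/2$ yields $5/8$, and one checks that $\{E_k \le 1/2\}$ is forward-invariant. The stronger asymptotic ratio of $1/2$ claimed in the theorem's final clause comes from the same monotonicity: the ratio is at most $1/2$ once $E_k \le (3-\sqrt{5})/2$, which holds eventually since $E_k \to 0$.

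To close the argument I need to show $E_k$ eventually drops below $1/2$ for every $x \ne 0$. I would use the factorization $r_{k+1} - r_k = \tfrac12 r_k(1-r_k)(1+r_k)$ to conclude that $\{r_k\}$ is strictly increasing on $(0,1)$, together with the observation that $E_{k+1} > 0$ whenever $E_k \in (0,3)$, so $r_k < 1$ is preserved throughout. Monotone convergence then forces $r_k \to 1$, the unique fixed point of the map in $(0,1]$. The case $x = 0$ is separate and trivial: the iteration is constant at $f_k = 1$, and the identity $x^2 f_k = 0 = |x|$ holds automatically.

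The main obstacle, and the reason the theorem only asserts the existence of a finite threshold index rather than a uniform one, is the slow transient when $|x|$ is small: near $r = 0$ the map behaves approximately like $r \mapsto (3/2)\,r$, so it takes on the order of $\log(1/|x|)$ iterations before $r_k$ enters the geometric-contraction regime. A uniform (and hence polynomial-degree-dependent) version of the theorem would require tracking this warm-up phase quantitatively, but for the pointwise convergence statement of the theorem it is enough that such a finite index exists, which the monotone-convergence step above supplies.
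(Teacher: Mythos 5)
Your proposal follows essentially the same route as the paper's proof: pass to the normalized variable $r_k = |x|\,f_k$, obtain the autonomous recurrence $r_{k+1} = \tfrac12 r_k(3-r_k^2)$, show $\{r_k\}$ increases monotonically to the fixed point $1$, and bound the error ratio by $\tfrac58$ once $E_k \le \tfrac12$; indeed your ratio $E_{k+1}/E_k = \tfrac12 E_k(3-E_k)$ is algebraically identical to the paper's $\alpha_k = 1 - \tfrac12 r_k(1+r_k)$, and evaluating at $E_k=\tfrac12$ (equivalently $r_k \ge \tfrac12$) gives the same $\tfrac58$. One point where you go beyond the paper: your observation that the ratio falls below $\tfrac12$ once $E_k \le (3-\sqrt{5})/2$, which happens eventually since $E_k \to 0$, actually justifies the theorem's final ``ratio at most $\tfrac12$'' clause, which the paper's own proof leaves unsupported (it stops at $\tfrac58$).

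One factual slip in the trivial case: for $x=0$ the iteration is \emph{not} constant at $f_k=1$; it reads $f_{k+1} = \tfrac32 f_k$, so $f_k = (3/2)^k \to \infty$, consistent with $1/|x|$ being formally infinite, which is how the paper handles it. Your conclusion for this case survives unchanged, since $x^2 f_k \equiv 0 = |x|$ regardless of the growth of $f_k$, but the claim as written is false and should be corrected.
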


\begin{proof}
For \(x=0\), the iteration reads \(f_{k+1} = \tfrac32\,f_k\) with \(f_0=1\), and so \(f_k\) diverges to \(+\infty\), matching the fact that \(1/|x|\) is formally infinite when \(|x|=0\).  Therefore, the limit statement is satisfied in this trivial case.  

For \(x \neq 0\), define \(r_k = |x|\, f_k\).  Substituting \(f_k = \tfrac{r_k}{|x|}\) into the iteration 
\[
f_{k+1} 
\;=\;
\tfrac12\, f_k\, \bigl(3 - x^2 f_k^2\bigr)
\]
gives
\[
r_{k+1} 
\;=\;
\tfrac12\,r_k\,\bigl(3 - r_k^2\bigr).
\]
It is direct to check that \(r=1\) is a fixed point and that \(r_0 = |x|\in (0,1]\).  Whenever \(0<r_k<1\), we have \(0<r_{k+1}<1\) and \(r_{k+1}>r_k\), so \(\{r_k\}\) is strictly increasing and bounded above by~\(1\).  It follows that \(r_k\to 1\).  

To see the linear (geometric) convergence, set \(E_k = 1 - r_k\).  Then a direct computation using
\[
1-r_{k+1}
\;=\;
1-\tfrac12\,r_k\,\bigl(3-r_k^2\bigr)
\;=\;
1-\tfrac32\,r_k + \tfrac12\,r_k^3
\]
shows that 
\[
E_{k+1} 
\;=\;
1 - r_{k+1}
\;=\;
\tfrac12\,\bigl[2 - 3\,r_k + r_k^3\bigr]
\;=\;
\tfrac12\,\bigl[(1 - r_k)\,\bigl(2 - r_k - r_k^2\bigr)\bigr]
\;=\;
\alpha_k\,E_k,
\]
where 
\[
\alpha_k 
\;=\;
1-\tfrac12\,r_k\,\bigl(1 + r_k\bigr).
\]
Since \(r_k\in(0,1)\), it follows \(\alpha_k \in (0,1)\).  Moreover, once \(r_k\) grows beyond some fixed threshold in \((0,1)\), say \(r_k \ge \tfrac12\), we obtain
\[
\alpha_k 
\;=\;
1 - \tfrac12\,r_k\,(1+r_k)
\;\le\;
1 - \tfrac12\cdot \tfrac12\cdot \tfrac32
\;=\;
1 - \tfrac38
\;=\;
\tfrac58,
\]
and thus beyond that finite index we have \(E_{k+1}\le\tfrac58 E_k\).  By induction, \(\{E_k\}\) converges to \(0\) exponentially fast.  Since \(f_k = \tfrac{r_k}{|x|}\) and \(r_k\to 1\), it follows that \(f_k \to \tfrac1{|x|}\) and that \(\{f_k\}\) converges at a geometric rate.  This completes the proof.
\end{proof}

\begin{remark}
The determination of a uniform constant governing the geometric rate of convergence remains an open problem. Further analysis in this direction would be of significant interest.
\end{remark}

A similar analysis can be done using equation \eqref{e:newtoniter} to show exponential convergence of deep polynomial to the inverse \(p\)-th root. 

\begin{figure}[H]
\begin{centering}
\captionsetup{format=myformat}
	\includegraphics[scale= 0.5]{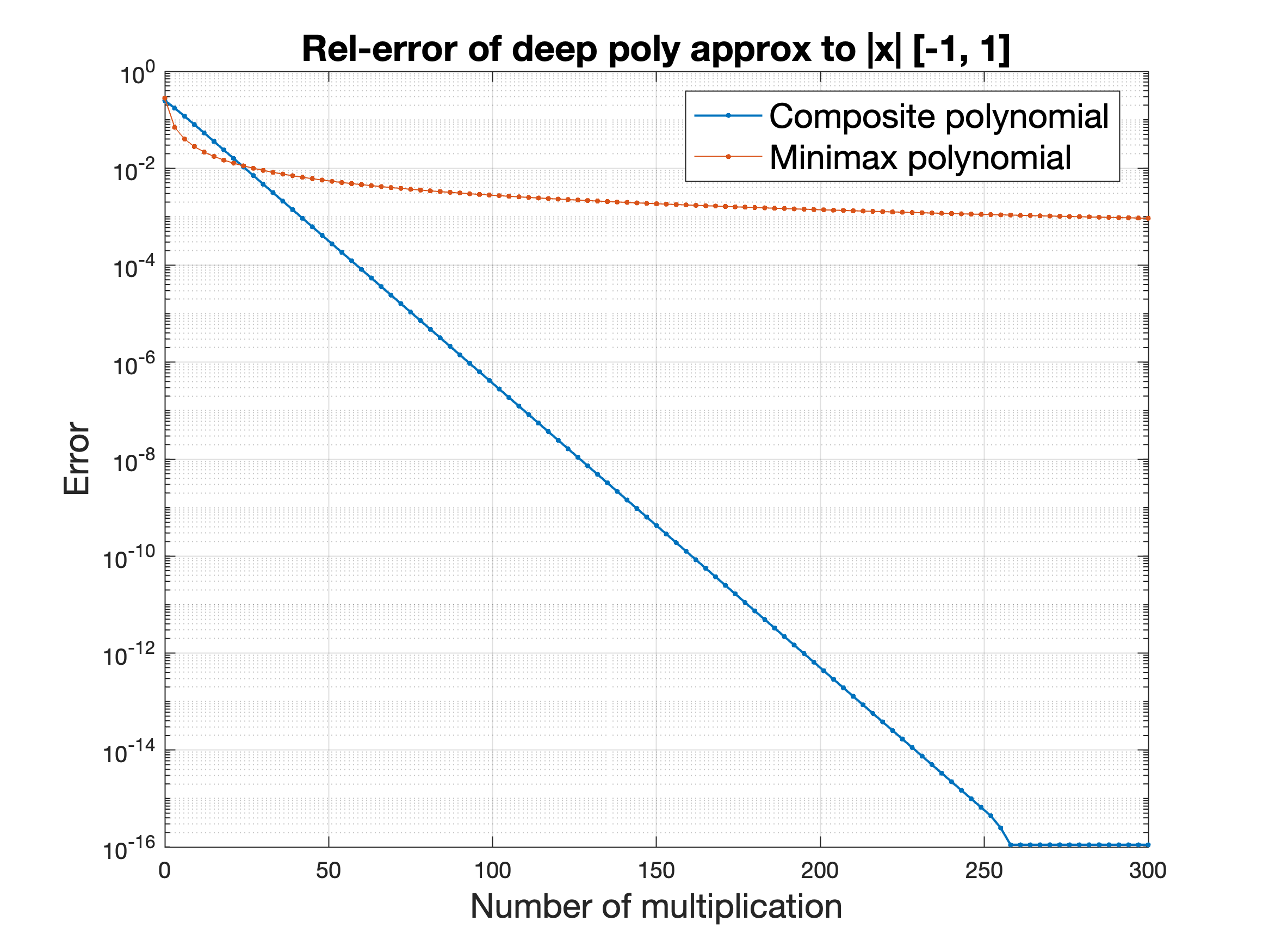}
	\caption{Relative error of the deep polynomial approximation to \(\lvert x\rvert\) on \([-1,1]\). The exponential decay of the error aligns with the theoretical linear convergence rate.}
 	\label{fig:deep_abs_err}
\end{centering}
\end{figure}

\begin{figure}[H]
\begin{centering}
\captionsetup{format=myformat}
	\includegraphics[scale= 0.5]{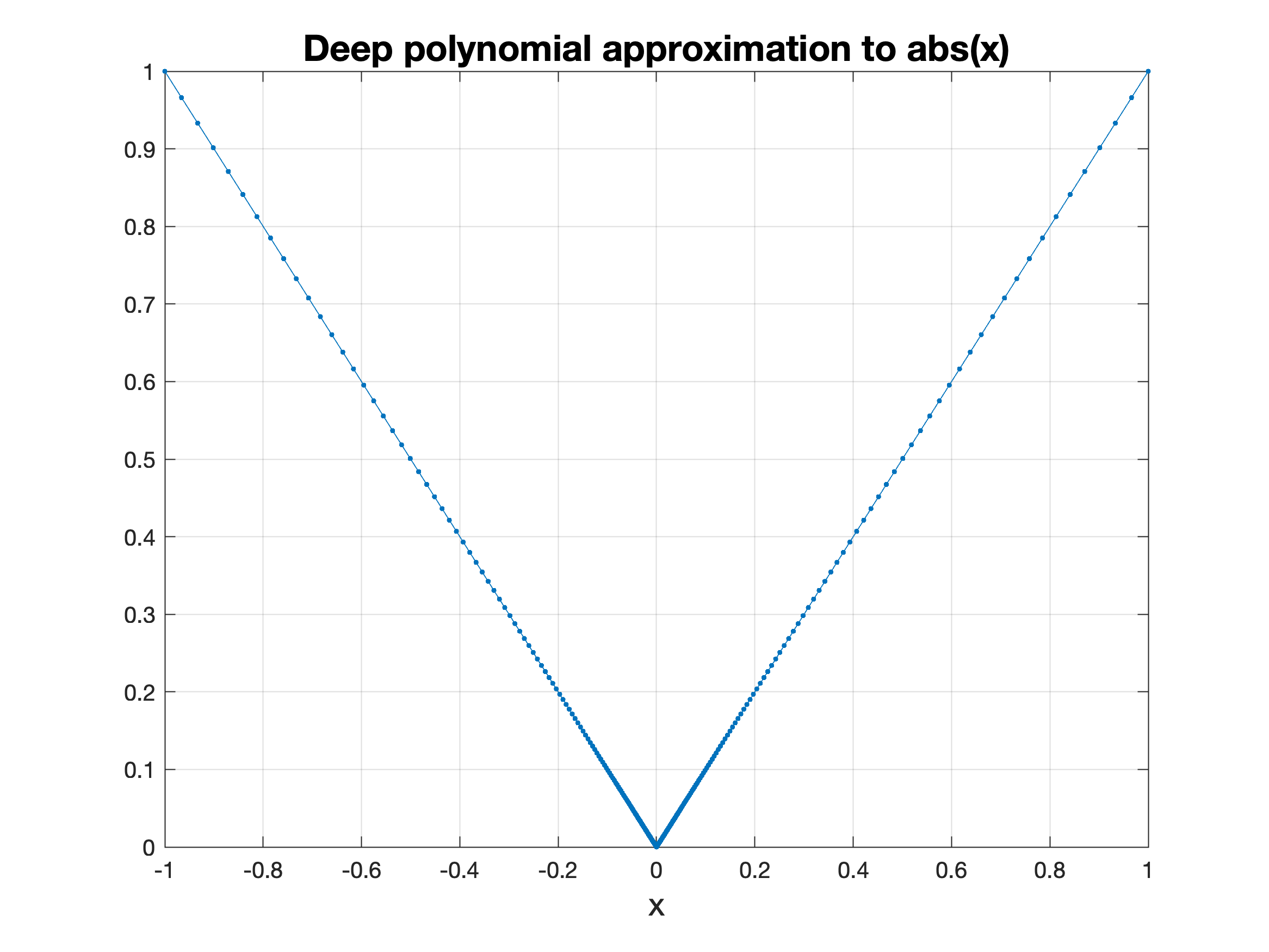}
	\caption{Deep polynomial approximation to \(\lvert x\rvert\) on \([-1,1]\). The graph illustrates accurate convergence to \(\lvert x\rvert\) across the interval, including endpoints and zero.}
 	\label{fig:deep_abs}
\end{centering}
\end{figure}

The degree of the composite polynomial \( f_k \) grows exponentially with each iteration of the Newton method, satisfying the recurrence relation \( d_{k+1} = 3d_k + 2 \) with \( d_0 = 0 \). Solving this explicitly gives \( d_k = 3^k - 1 \), demonstrating that the degree of \( f_k(x) \) increases exponentially with \( k \).

The current iteration \( f_k \) represents a composite polynomial approximation of the inverse \( p \)th root, \( \frac{1}{x^{1/p}} \). However, a deep polynomial approximation for the \( p \)th root itself, \( x^{1/p} \), remains unknown.


\section{Examples} \label{sec:results}

We compare the deep polynomial approximation with its single-layer counterpart by ensuring that the number of optimized coefficients stays the same. For instance, in Figure \ref{fig:2runge}, we compose two polynomials with 5 coefficients each, and after normalization, our single-layer counterpart has $5+5-2=8$ coefficients. The problem of minimizing the least squares error of a polynomial is convex, while the loss function associated with deep polynomial approximation resides on a non-linear manifold. We hope that this difference in manifold structure allows deep polynomials the ability to generalize over a larger class of functions; like a space-filling curve.

Consequently, it is challenging to locate the global minimum of a fixed degree deep polynomial. In a subsection \ref{sec:ensemble}, we will demonstrate numerically how optimization of finding such polynomials is difficult by employing an ensemble of random initial conditions to explore the optimization terrain.

We call an $L^\text{th}$ layer {\it deep} polynomial of type ($\mu_1, \mu_2, \dots, \mu_L$) when the $i^{\text{th}}$ polynomial layers have $\mu_i$ numbers of coefficients, recall that the inner layers $\mu_2, \dots, \mu_{L}$ are normalized, therefore the composite polynomial have $\mu_1 + \mu_2 + \cdots + \mu_L - 2(L-1)$ degrees of freedom. Thus the corresponding single-layer polynomial has the same numbers of coefficients as the degrees of freedom of the {\it deep} approximates.


\subsection{Example 1: Logistic $ \tanh(\alpha x) = \frac{e^{\alpha x}-e^{-\alpha x}}{e^{\alpha x}+e^{-\alpha x}}$ interval of $(-1,1)$} \label{sec:logistic}

Our first example examines the approximation of the sigmoid activation function commonly used in machine learning. For large \(\alpha > 0\), $\tanh(\alpha x) \approx \text{sgn}(x)$ and we have shown that deep polynomial approximations achieve exponential efficiency for $|x|$ and hence $\text{sgn}(x) = \frac{x}{|x|}$. To investigate whether this efficiency holds for smaller \(\alpha\), and to assess the effectiveness of our optimization algorithm in identifying such approximations, we set \(\alpha = 3\) and solve the minimization problem through 10 random trials.

Using a two-layer type \((4,4)\) approximation where both polynomials \(p\) and \(q\) are of degree 3 we achieved an accuracy of \(10^{-4}\) in the \(L_2\) norm. In comparison, a single-layer approximation with six coefficients attains a precision of \(10^{-3}\) under the same norm.

Bernstein's theorem for steep polynomials states that for a polynomial \( p \) of degree \( n \),
\[
\max_{|z|=1} |p'(z)| \leq n \max_{|z|=1} |p(z)|.
\]
For \( \tanh(\alpha x) \), the derivative at zero is \( \alpha \). Consequently, a polynomial approximation must have a degree of at least \( \mathcal{O}(\alpha) \) to achieve a good fit. Since the degree of deep approximations grows exponentially with the number of compositions, one could expect deep polynomials to outperform standard polynomials for steep functions. We illustrated this with a deep approximation of type (15, 15), which achieves an \( L_2 \) error of \( 1.65 \times 10^{-3} \), compared to \( 1.33 \times 10^{-1} \) for the corresponding minimax approximation (see Figure \ref{fig:sig1515}).

\begin{figure}[H]
\begin{centering}
\captionsetup{format=myformat}
	\includegraphics[scale= 0.35]{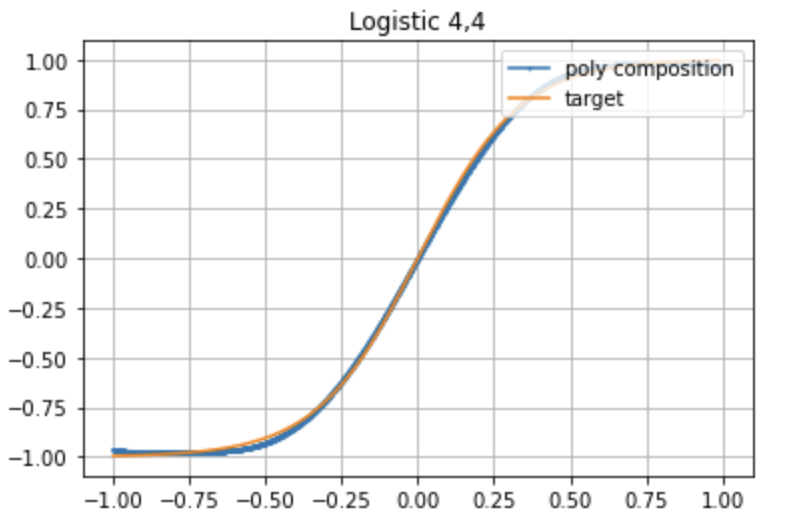}
	\caption{Logistic 10 random run, $L_2$ error: 2.411e-02, \\$p = 1x^3 -(2.5\cdot 10^{-5})x^2 - 1.94x + 0$\\
 $q = (4.43\cdot 10^{-1})x^3 -(2.92\cdot 10^{-7})x^2 -1.42x -(2.28\cdot 10^{-6})$ }
 	\label{fig:r1}
\end{centering}
\end{figure}

\begin{figure}[H]
\begin{centering}
\captionsetup{format=myformat}
	\includegraphics[scale= 0.40]{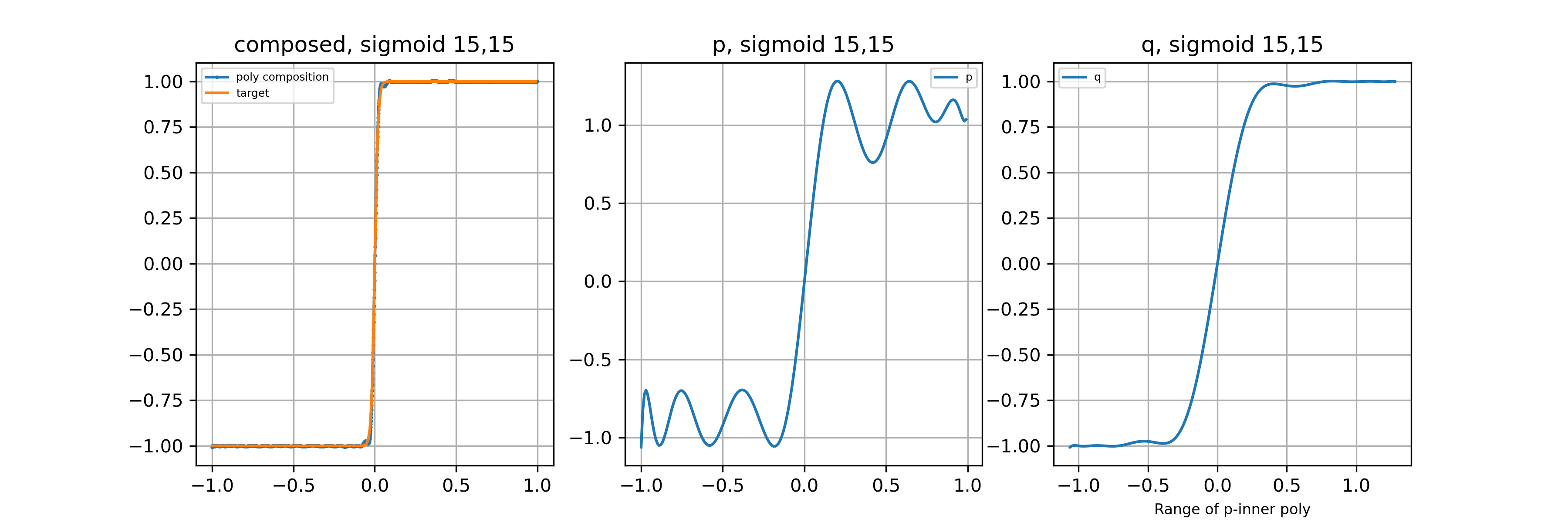}
	\caption{Sigmoid, $\alpha$ = 50, type (15, 15)}
 	\label{fig:sig1515}
\end{centering}
\end{figure}


\subsection{Example 2: Runge $\frac{1}{1+25x^2}$} \label{sec:runge}
Carl Runge presented an example of an analytic function for which polynomial interpolation diverges when equispaced nodes are used. This function exhibits distinctive properties in the complex plane. To evaluate the performance of deep polynomials, we approximate it using a type (5,5) approximation. Even for this relatively simple function, the optimization process is highly sensitive to the initial conditions. Figure \ref{fig:2runge} shows the best result among 10 trials. The \( L_2 \) error reveals the presence of multiple local minima, specifically at \( 2.73 \times 10^{-2} \), \( 5.7 \times 10^{-2} \), and \( 8.7 \times 10^{-2} \) (see Table \ref{table:rungeerror} and Figure \ref{fig:2runge}).

\begin{table}[!ht]
\centering
\begin{tabular}{||c | c||} 
 \hline
 Iteration & $L_2$ error \\ [0.5ex] 
 \hline\hline
 1 & 9.6448e-02 \\ 
 2 & 2.7334-02  \\
 3 & 2.7342e-02  \\
 4 & 2.9523e-02  \\
 5 & 8.8202e-02  \\ 
 6 & 5.7279e-02  \\
 7 & 5.6871e-02 \\
 8 & 8.7658e-02  \\
 9 & 2.7334e-02  \\
 10 & 8.7835e-02  \\ [1ex] 
 \hline
\end{tabular}
\caption{Runge $\text{deg}(p)=\text{deg}(q)=4$, $n=10$ random start $L_2$ error}
\label{table:rungeerror}
\end{table}

\begin{figure}[H]
\centering
\begin{subfigure}{0.4\textwidth}
  \centering
  \fbox{\includegraphics[width=0.9\linewidth]{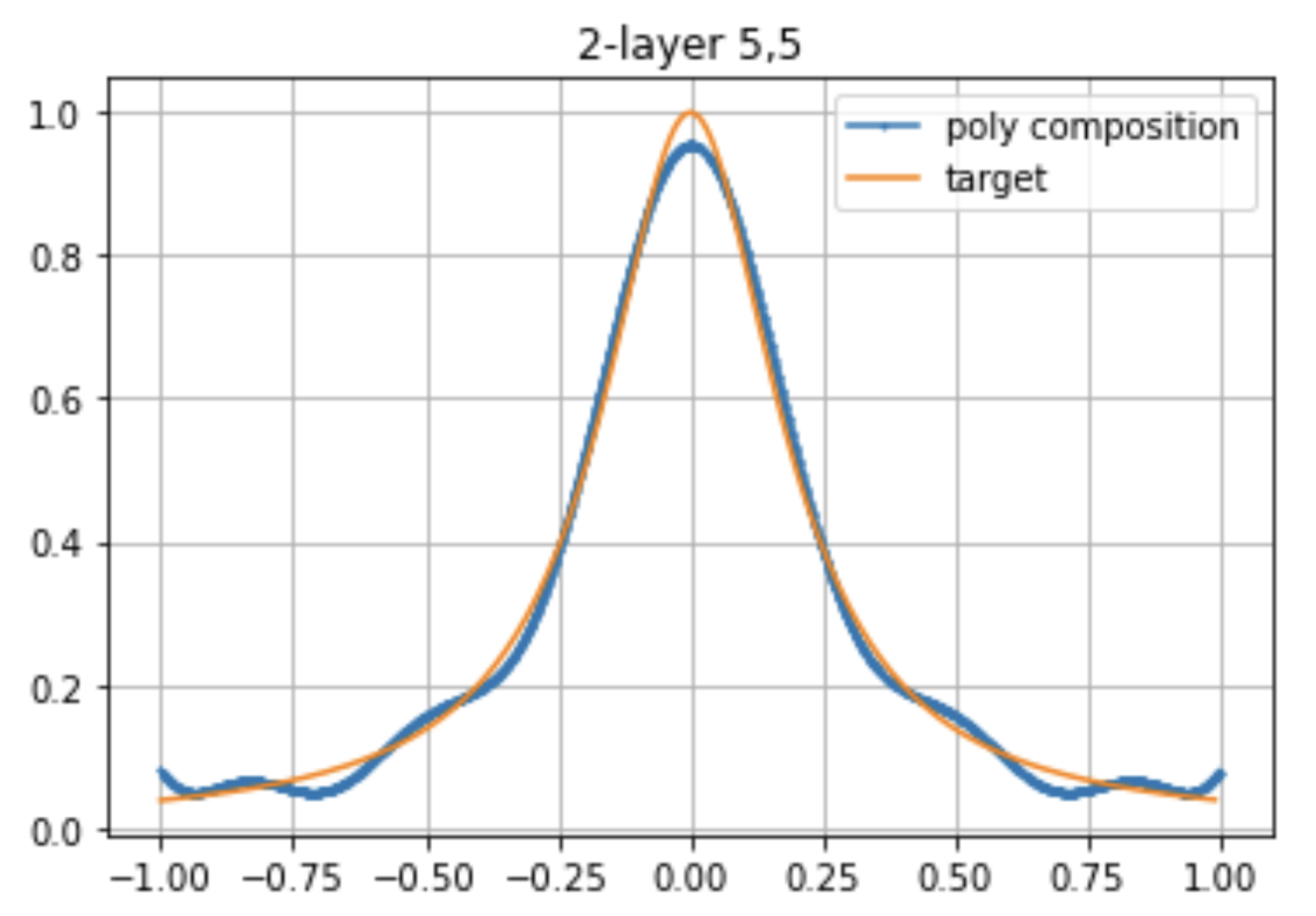}}
   \caption{Runge 5,5 $L_2$ error: 2.73e-02}
\end{subfigure}%
\begin{subfigure}{0.4025\textwidth}
  \centering
  \fbox{\includegraphics[width=0.89\linewidth]{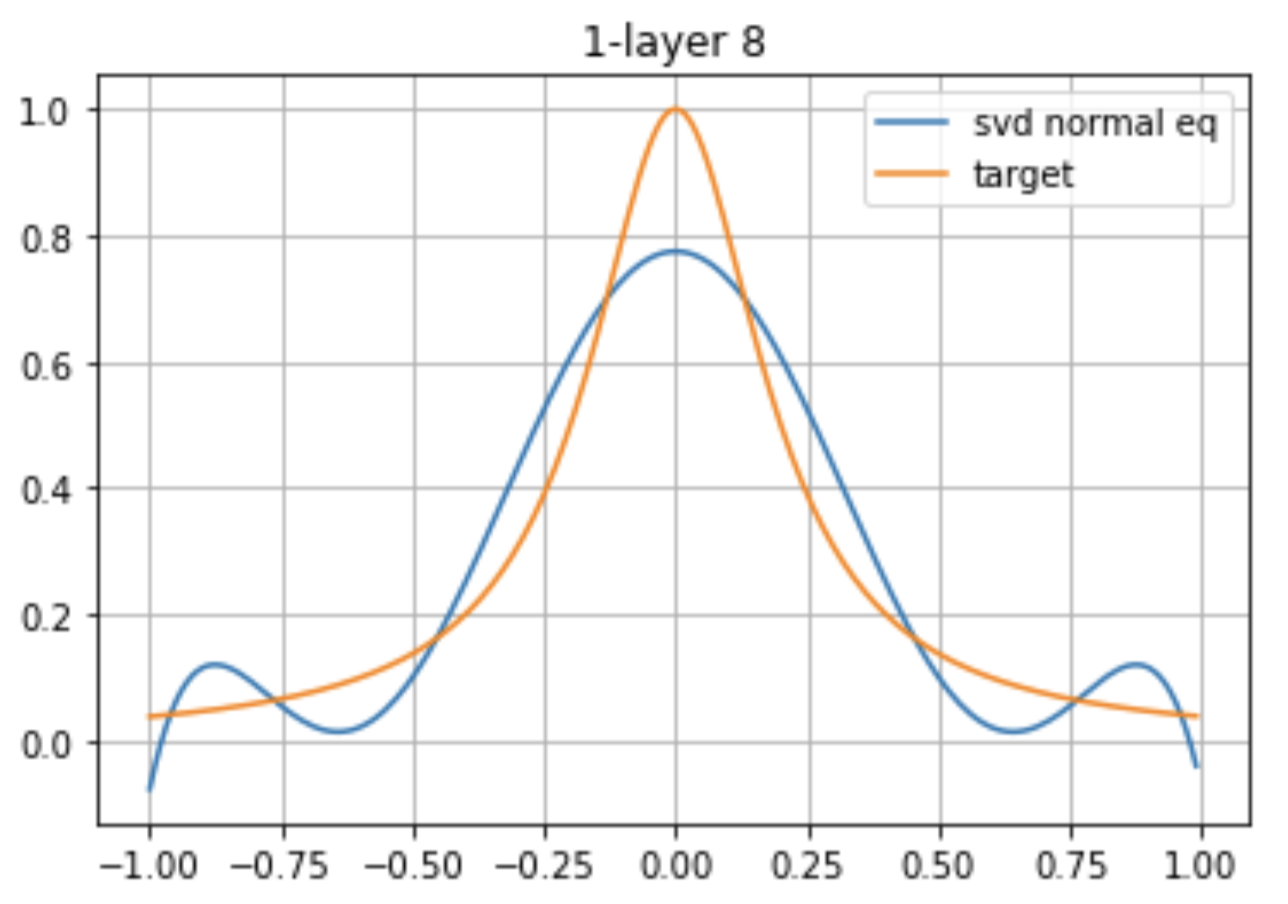}}
  \caption{Runge 8 $L_2$ error: 1.24e-01}
\end{subfigure}
\caption{Runge 2 layer vs 1 layer}
\label{fig:2runge}  
\end{figure}

In the 3-layer regime, we also encounter the issue of multiple local minima however our resulting error for composing 3, deg(4) polynomial with normalization is much better. (See Figure \ref{fig:3runge})

\begin{figure}[H]
\centering
\begin{subfigure}{0.4\textwidth}
  \centering
  \fbox{\includegraphics[width=0.9\linewidth]{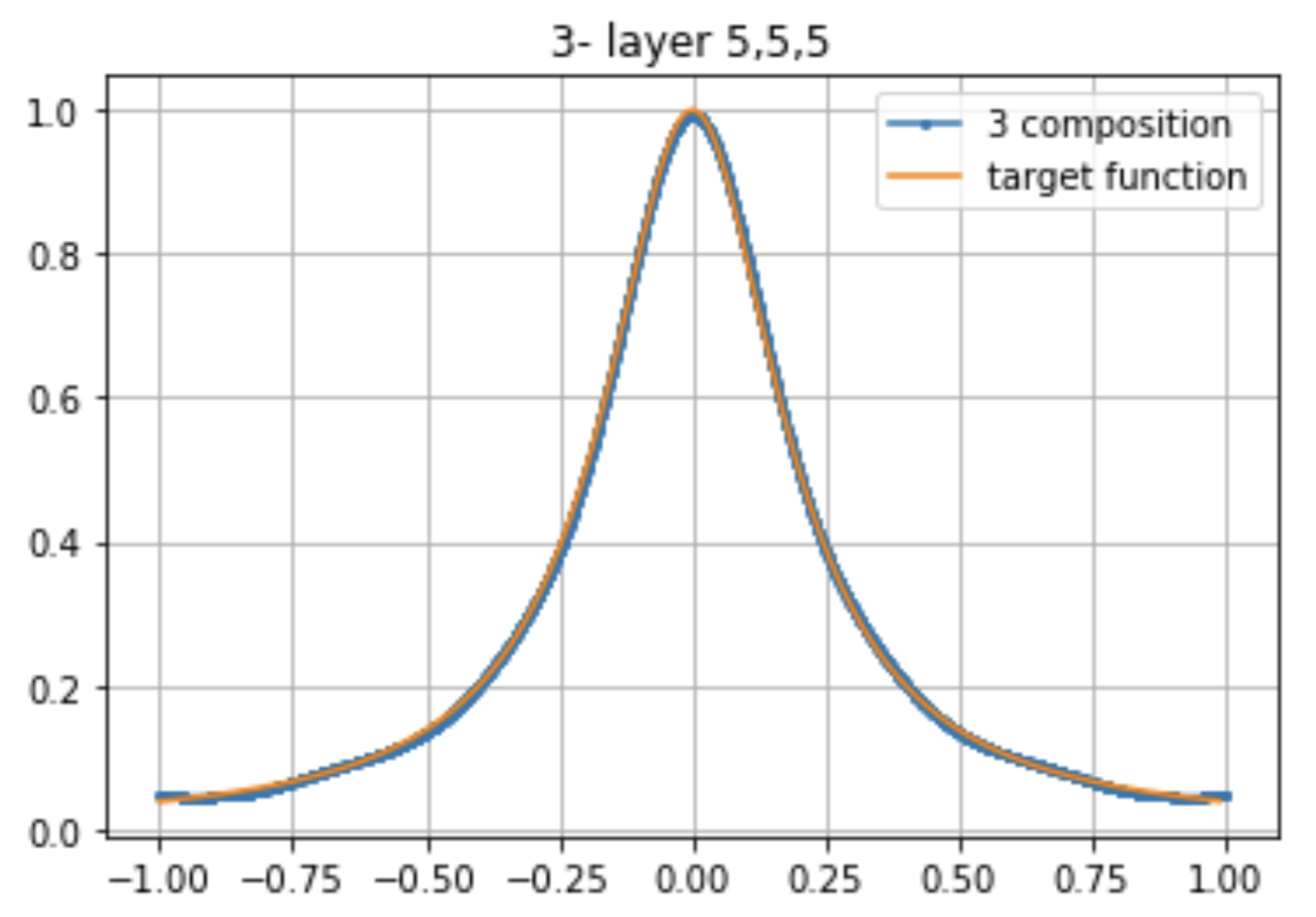}}
   \caption{Runge 5,5,5 $L_2$ error: 2.97e-03}
\end{subfigure}%
\begin{subfigure}{0.4\textwidth}
  \centering
  \fbox{\includegraphics[width=0.9\linewidth]{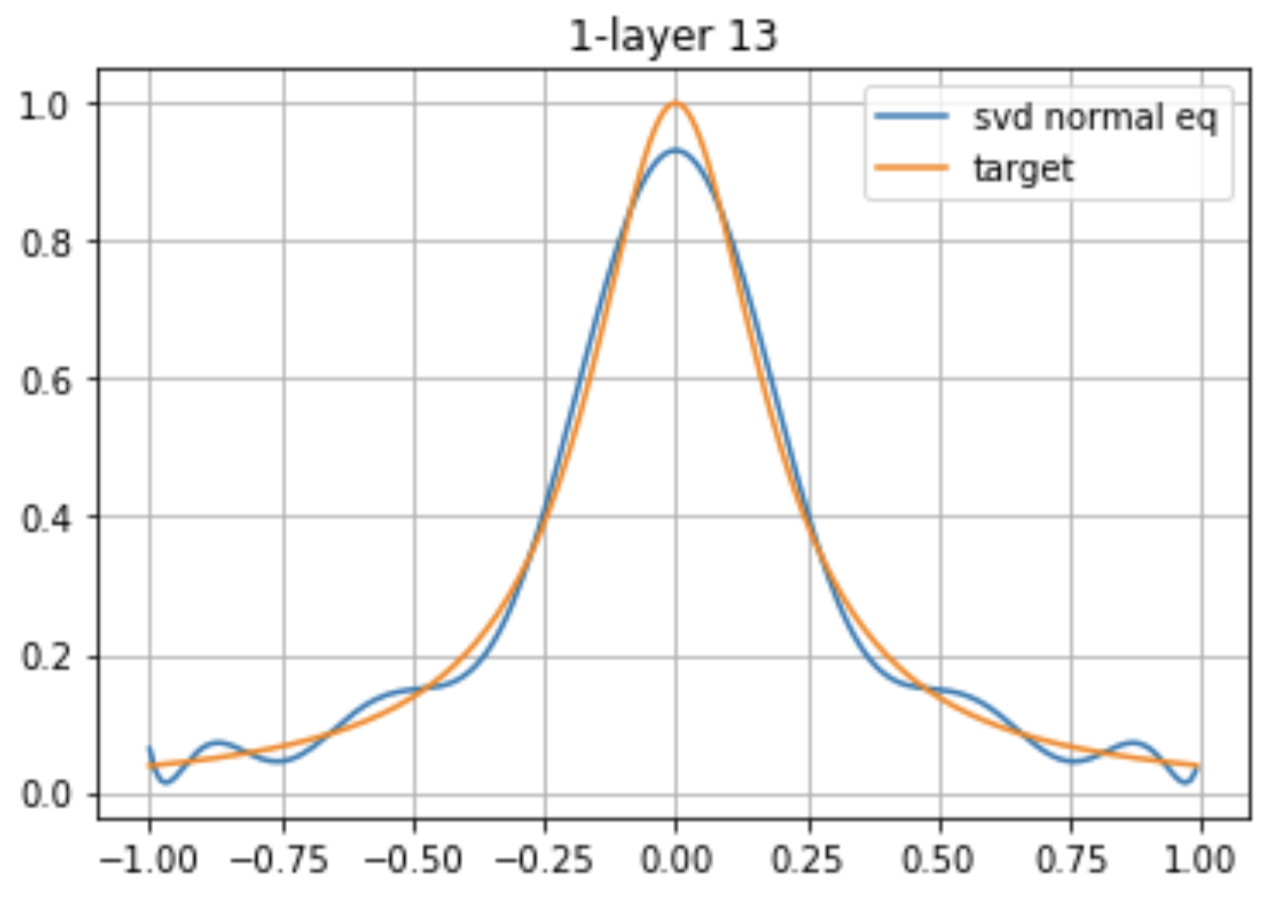}}
  \caption{Runge 13 $L_2$ error: 3.79e-02}
\end{subfigure}
\caption{Runge 3 layer vs 1 layer}
\label{fig:3runge} 
\end{figure}


\subsection{Example 3: Bessel function}

Bessel functions are important as they arrises as the separation of variable solution to Laplace and Helmholtz equation in cylindrical and spherical coordinates. The result of our Bessel approximation is surprisingly accurate in Figure \ref{fig:bessel10} with just 10 random initial condition trails. Admittedly, we could further speed up the computation by noticing the even symmetry around the origin of both Runge and Bessel, hence approximating only the interval (0,1).

\begin{figure}[H]
\centering
\begin{subfigure}{0.4\textwidth}
  \centering
  \fbox{\includegraphics[width=0.9\linewidth]{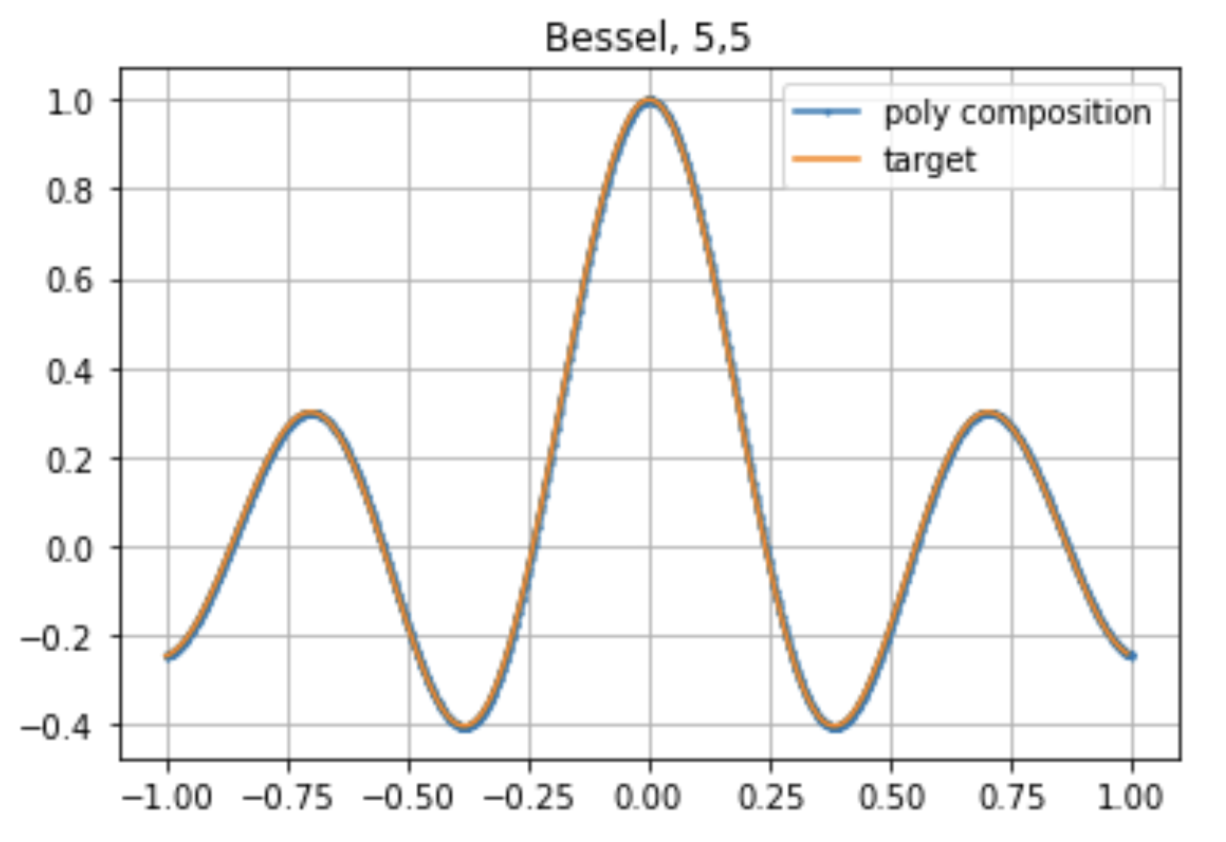}}
   \caption{Bessel 5,5 $L_2$ error: 1.02e-03}
\end{subfigure}%
\begin{subfigure}{0.409\textwidth}
  \centering
  \fbox{\includegraphics[width=0.906\linewidth]{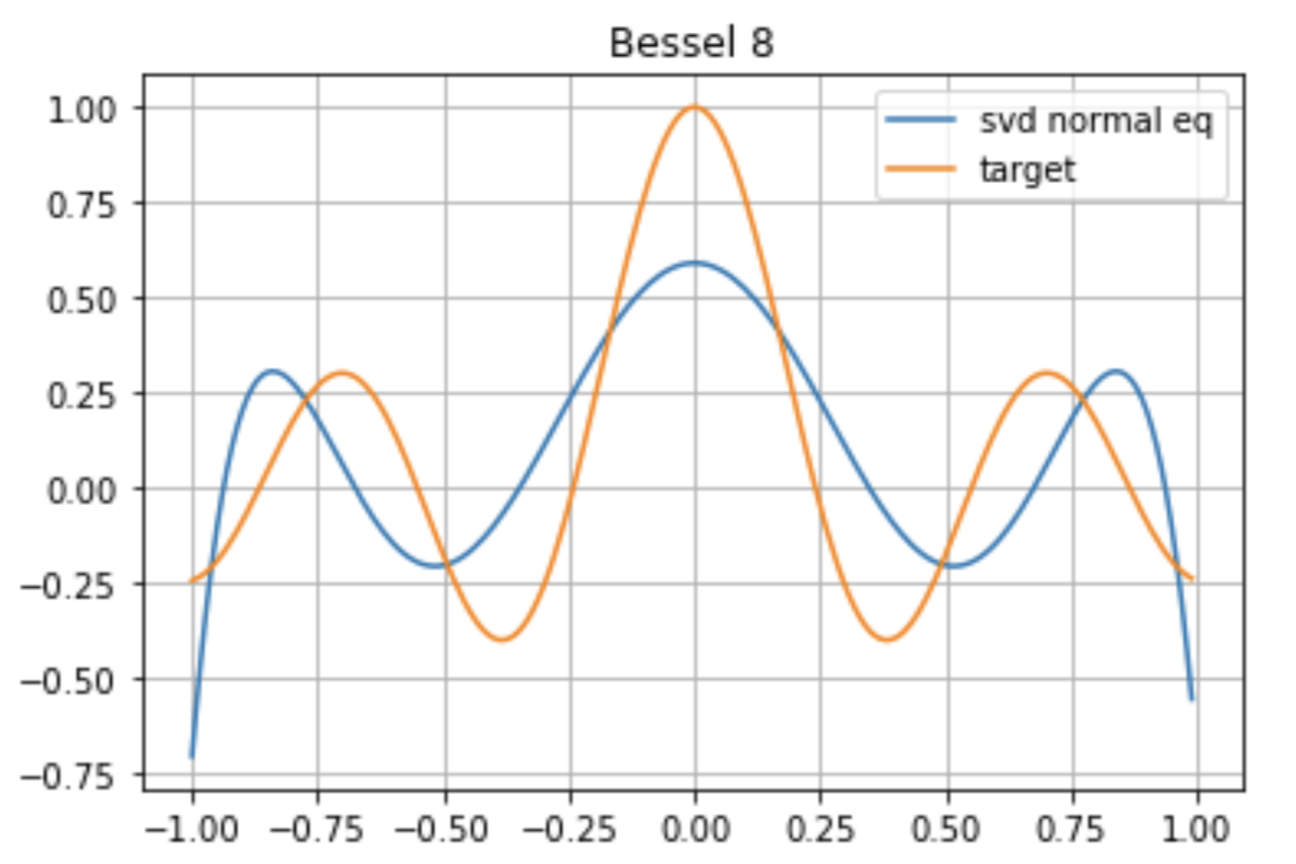}}
  \caption{Bessel 8 $L_2$ error: 3.57e-01}
\end{subfigure}
\caption{$J_{0}(10x)$, 2 layer vs 1 layer}
\label{fig:bessel10}  
\end{figure}

\begin{figure}[H]
\centering
\begin{subfigure}{0.402\textwidth}
  \centering
  \fbox{\includegraphics[width=0.9\linewidth]{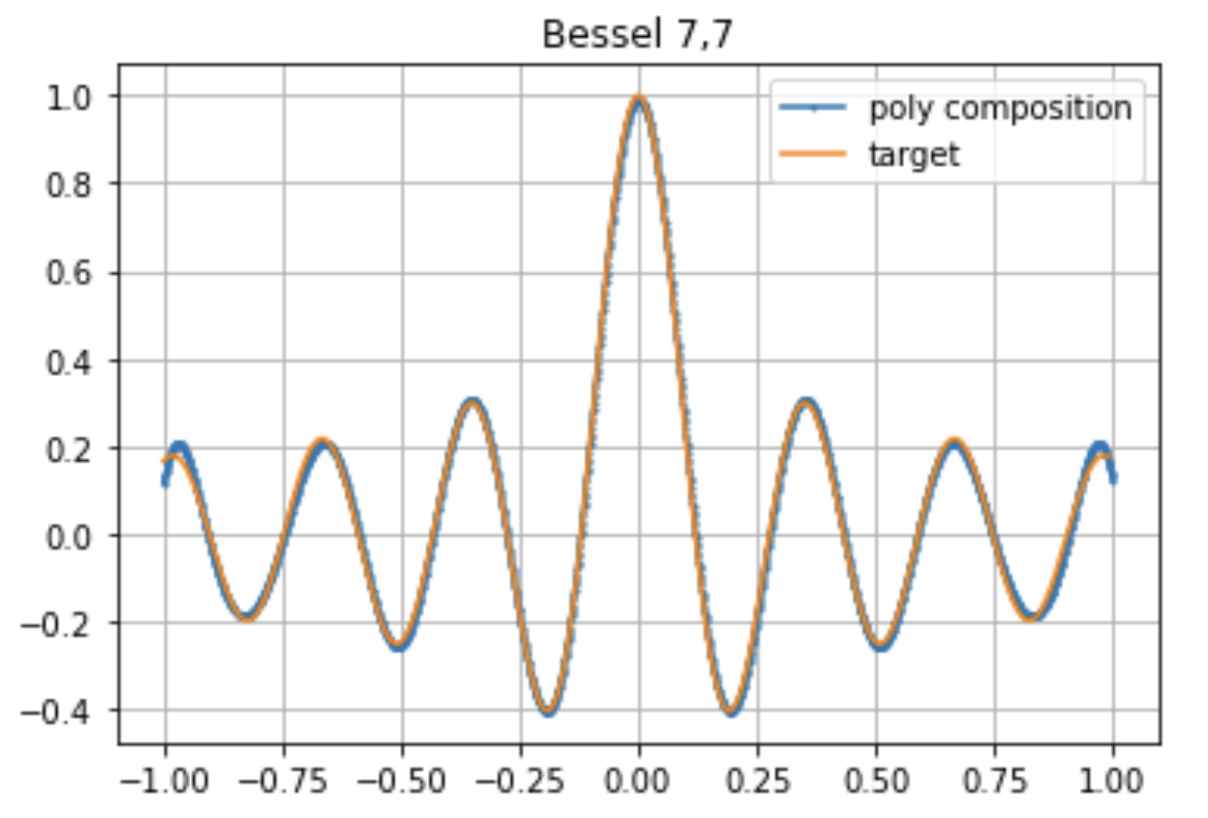}}
   \caption{Bessel 7,7 $L_2$ error: 1.53e-02}
\end{subfigure}%
\begin{subfigure}{0.4\textwidth}
  \centering
  \fbox{\includegraphics[width=0.9\linewidth]{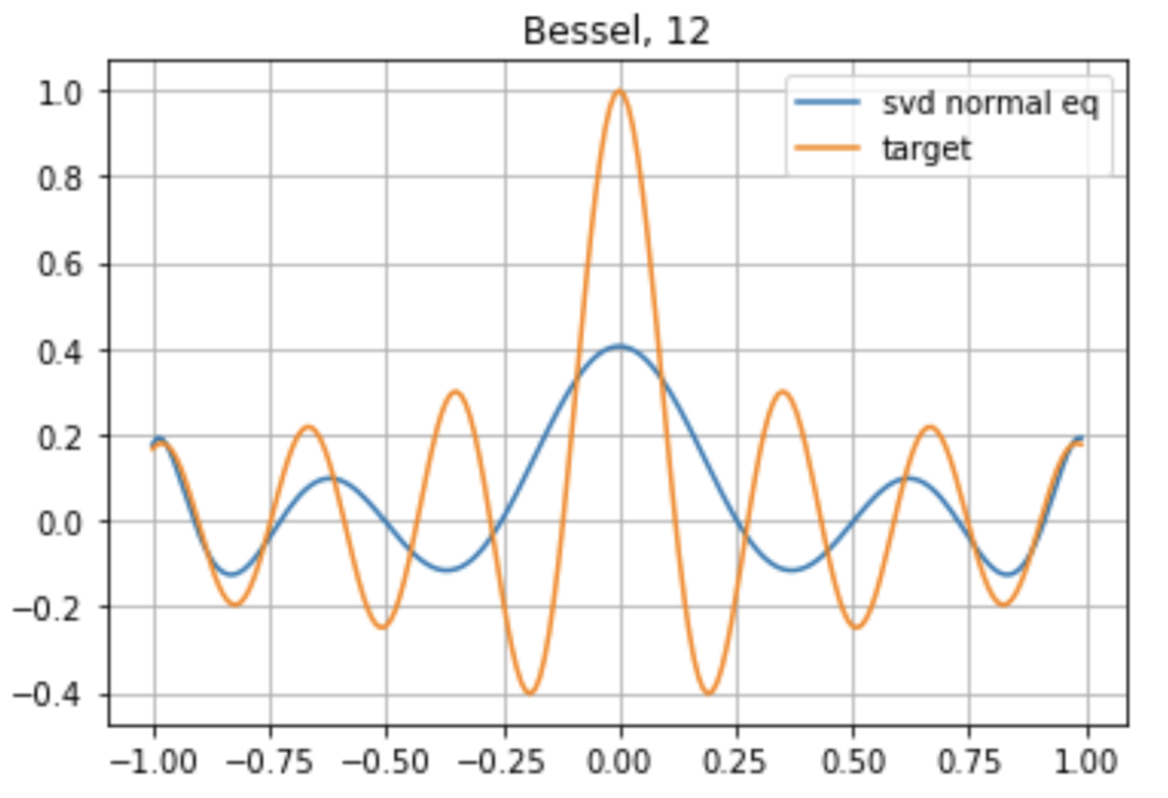}}
  \caption{Bessel 12 $L_2$ error: 3.60e-01 }
\end{subfigure}
\caption{$J_{0}(20x)$, 2 layer vs 1 layer}
\label{fig:bessel20}  
\end{figure}

\begin{figure}[H]
\centering
\begin{subfigure}{0.402\textwidth}
  \centering
  \fbox{\includegraphics[width=0.9\linewidth]{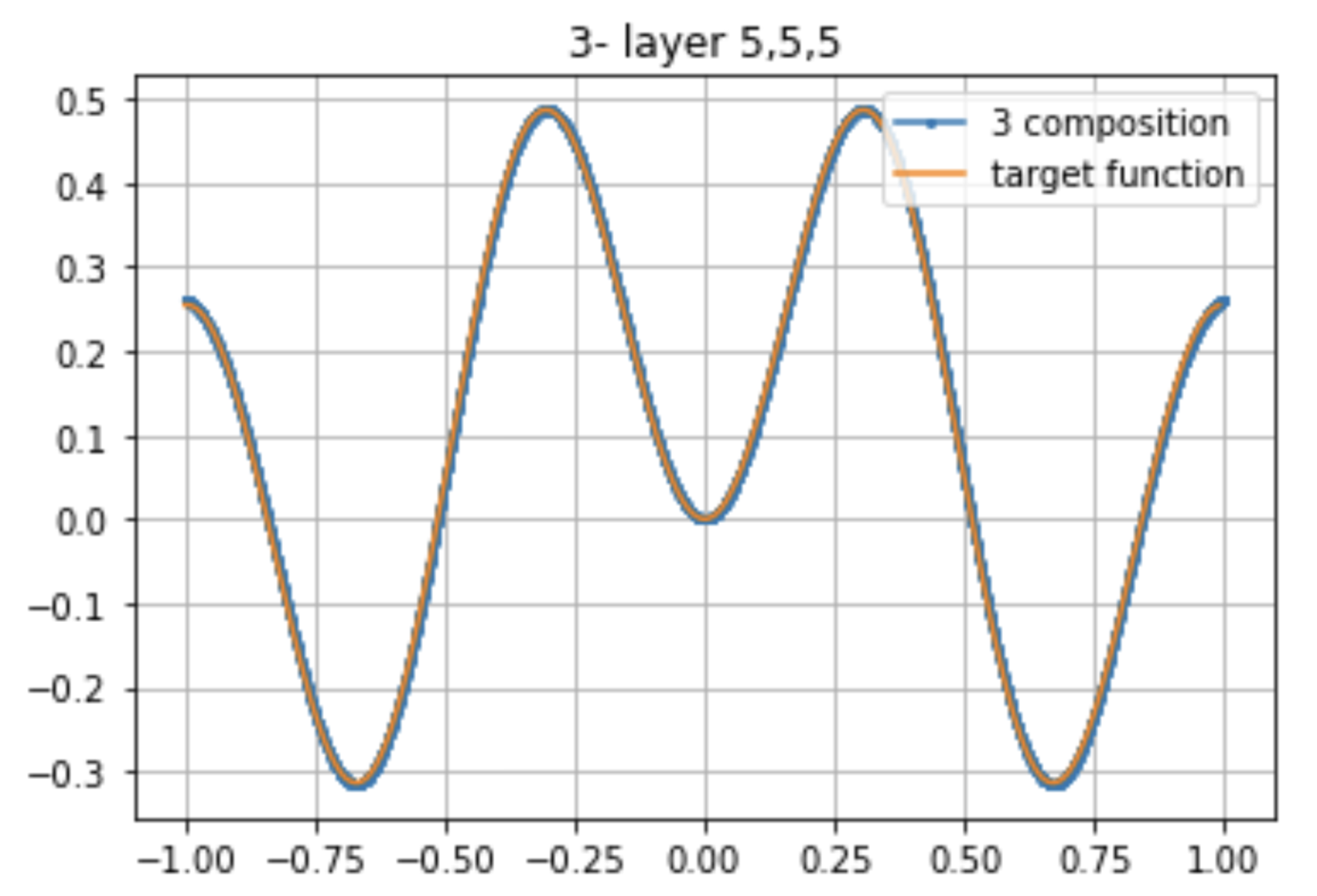}}
   \caption{Bessel 5,5,5 $L_2$ error: 7.15e-04}
\end{subfigure}%
\begin{subfigure}{0.4\textwidth}
  \centering
  \fbox{\includegraphics[width=0.905\linewidth]{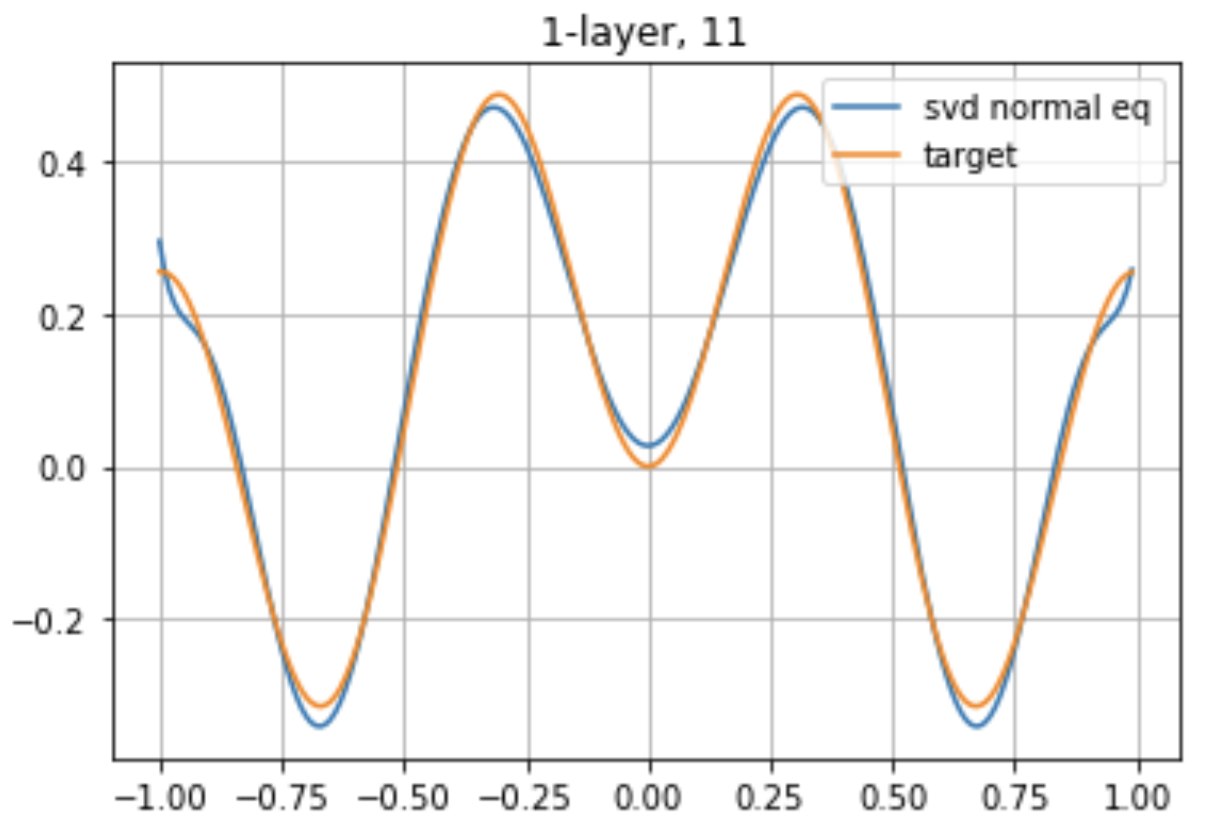}}
  \caption{Bessel 11 $L_2$ error: 2.78e-02  }
\end{subfigure}
\caption{$J_{2}(10x)$, 3 layer vs 1 layer}
\label{fig:bessel3layer}
\end{figure}


\subsubsection{Deflation with Bessel function}  \label{sec:deflation_example}

We illustrate the deflation algorithm using Bessel function with fixed initial condition\footnote{Initial condition used for deflation in section \ref{sec:deflation_example}: $-0.269553, 1.757204,$ \\ $0.509716, 1.428677, -1.660497, 1.703788, -2.291055, 0.557481 $.}. Figure \ref{fig:deflation1} shows the result after 1 deflation iteration. We see that the error has decreased and the 2 individual polynomials $p, q$ are different.

Deflation, combined with random initial conditions, proves to be a useful tool for problems with persistent suboptimal basins.

\begin{figure}[H]
\begin{centering}
\captionsetup{format=myformat}
	\fbox{\includegraphics[scale= 0.4]{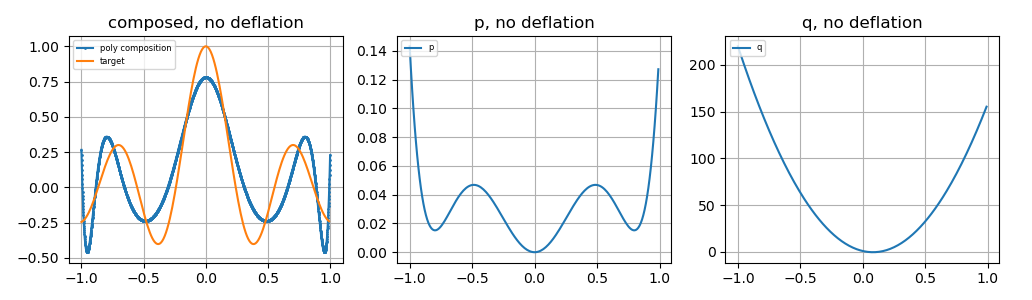}}
	\caption{Deflation 0th, $L_2$ error: 2.70e-01 }
 	\label{fig:deflation0}
\end{centering}
\end{figure}

\begin{figure}[H]
\begin{centering}
\captionsetup{format=myformat}
	\fbox{\includegraphics[scale= 0.4]{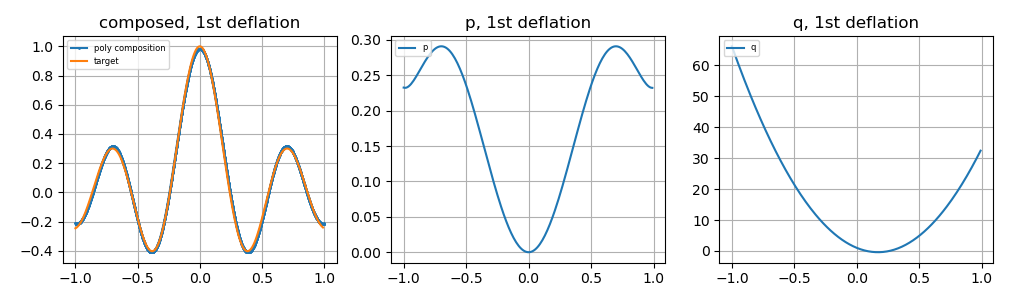}}
	\caption{Deflation 1st, $L_2$ error: 1.70e-02 }
 	\label{fig:deflation1}
\end{centering}
\end{figure}


\subsection{Comparing different deg of $p$ and $q$, for fixed degrees of freedom} \label{sec:parameter_sweep}

We approximate the shifted Bessel function \( J_1(20(x+1)) \) over the interval \([-1,1]\) to avoid the inherent symmetry. The two-layer approximation error is plotted relative to the linear least squares error, as shown in Figure \ref{fig:bescomerr}. The results indicate that the best approximation is achieved when the degree of the composite polynomial is maximized.

\begin{remark}
    This suggests that depth and width must be scaled together to control error growth. In \cite{Hanin}, Hanin demonstrates that the finite-width error of a neural network scales approximately as \(L/n\), implying that per-layer errors accumulate with the effective depth. By analyzing the cumulant recursions (Theorem~3.1 and Corollary~3.4), he shows that the network’s performance is constrained by its narrowest layer. Consequently, for a two-layer network with a fixed total number of coefficients \(T\), the optimal design is achieved by evenly distributing the coefficients between the layers, i.e., \(n_1 = n_2 = T/2\), thereby minimizing the overall error.
\end{remark}

\begin{figure}[H]
\begin{center}
	\fbox{\includegraphics[scale= 0.26]{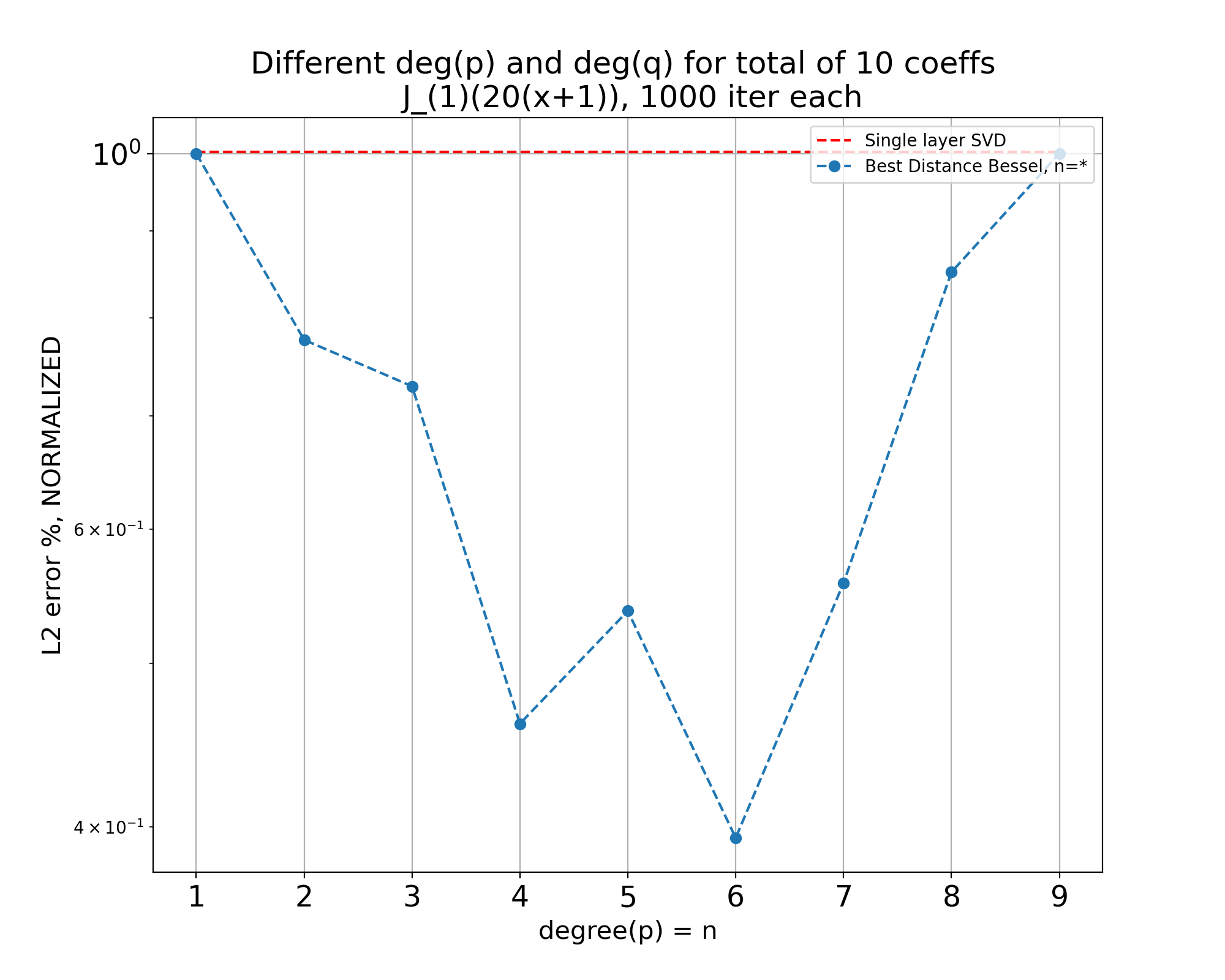}}
\end{center}
\caption{Relative error for different deg($p$), $J_{1}(20(x+1))$ BFGS $n=1000$ trials}
\label{fig:bescomerr}
\end{figure}
 
 The practical performance of such approximations relies on the effectiveness of the optimization algorithm. The left and right endpoints of the parameters sweep graph both have linear lost functions with respect to the deep polynomial coefficients (either the outer or inner polynomial is linear). In this case, we can find a one-to-one map between the deep polynomial coefficients to the linear least squares problem provided that $b_d$ the outer polynomial leading coefficients is not zero. 
 
 Delving into the specific example of Figure \ref{fig:j4030param}, we define $p_{\text{left}}(x) = 1x + 0$ (inner polynomial) after normalization and $q_{\text{left}}(x) = b_{27}x^{27} + b_{26}x^{26} + \cdots + b_1 x + b_0$ (outer polynomial). Then, the composite $q(p_{\text{left}}(x)) = q(p)_{\text{left}} = q_{\text{left}}$. 

For the right endpoint, the polynomial corresponds to $p_{\text{right}}(x) = 1x^{27} + \widetilde a_{26}x^{26} + \cdots + \widetilde a_1 x + 0$ (inner) and $q_{\text{right}}(x) = \widetilde b_1 x + \widetilde b_0$ (outer). The composite $q(p)_{\text{right}} = \widetilde b_1(1x^{27} + \widetilde a_{26}x^{26} + \cdots + \widetilde a_1 x + 0) + \widetilde b_0$. The right composite polynomial is a proper subset of the left composite polynomial since $\widetilde b_1$ cannot be $0$ to match the coefficients by the degree of the two polynomials. This difference is also evident in the gradients of the two loss functions, where $$\widetilde b_1 \cdot  \frac{\partial}{\partial b_i}\left[ \frac{1}{2}||q(p)_{\text{left}} - J_{40}(30x)|| \right ] = \frac{\partial}{\partial \widetilde a_i}\left [ \frac{1}{2}||q(p)_{\text{right}} - J_{40}(30x)|| \right ] $$ for $1 \leq i \leq 26$. Furthermore, the partial derivatives with respect to the highest degree of both composites differ, with the inner polynomial being linear, i.e., $\frac{\partial}{\partial b_{27}} q(p)_{\text{left}} = x^{27}$, and the outer polynomial being linear, i.e, $\frac{\partial}{\partial \widetilde b_{1}} q(p)_{\text{right}} = x^{27} + \widetilde a_{26}x^{26} + \cdots + \widetilde a_1 x$.

Using this map we can use the coefficients from linear least squares as initial conditions for the optimization in the case where either the outer or inner polynomial is linear (see Figure \ref{fig:j4030param}). 

\begin{figure}[H]
\begin{center}
	\fbox{\includegraphics[scale= 0.26]{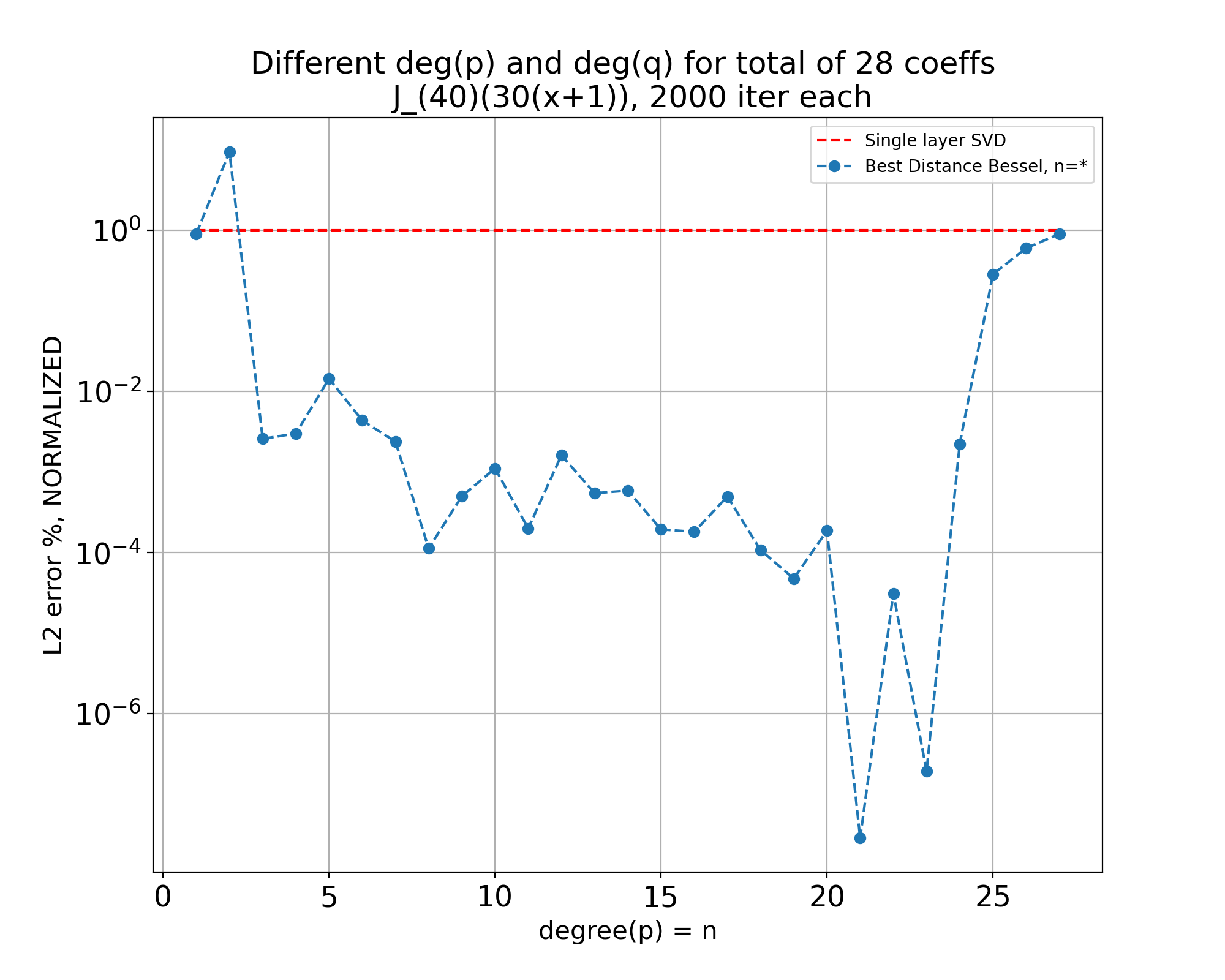}}
\end{center}
\caption{Relative error for different deg($p$), $J_{40}(30(x+1))$ BFGS $n=2000$ trials}
\label{fig:j4030param}
\end{figure}

To get further insights as to how the slight difference in the lost function of the two linear cases affects the optimization we can examine a simplified model with the same features. This simplified problem involves approximating $J_{40}(30x)$ using a 2-layer composite polynomial of degree 2 with 2 degrees of freedom. This leads to two sets of functions. In the first pair, we have $p_{\text{deg1}} = x$ and $q_{\text{deg2}} = b_2x^2 + b_1x$, resulting in a composition of $q(p(x))_{\text{deg1,deg2}} = b_2x^2 + b_1x$. In the second pair, we have $p_{\text{deg2}} = x^2 + a_1x$ and $q_{\text{deg1}} = b_1x$, which leads to a composition of $q(p(x))_{\text{deg2,deg1}} = b_1x^2 + \boldsymbol{b_1a_1}x$. For simplicity, we consider them as zero polynomials (i.e., with no constant term). The first composite polynomial $q(p(x))_{\text{deg1,deg2}}$ corresponds to the familiar linear least squares formulation. Conversely, the latter $q(p(x))_{\text{deg2,deg1}}$ has the bolded coefficients ``coupled", resulting in a very ``flat" loss function along $a_1$ when $b_1$ is close to zero (refer to Figure \ref{fig:lostplot}).

\begin{figure}[H]
\begin{center}
	\fbox{\includegraphics[scale= 0.22]{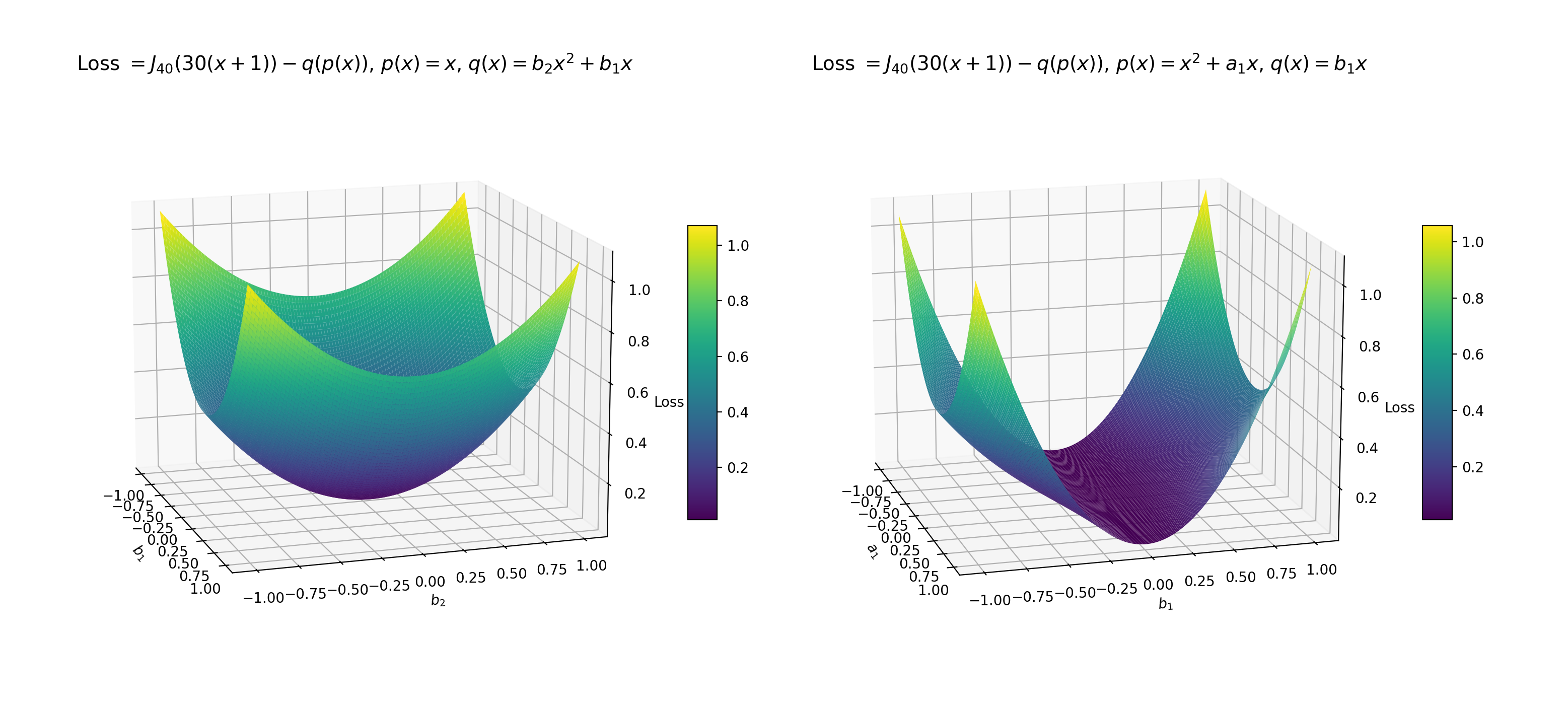}}
\end{center}
\caption{Lost function plot of $J_{40}(30(x+1))$ under some simplification}
\label{fig:lostplot}
\end{figure}


\subsection{Ensemble of random initial condition} \label{sec:ensemble}

\begin{figure}[H]
\begin{center}
	\fbox{\includegraphics[scale= 0.072]{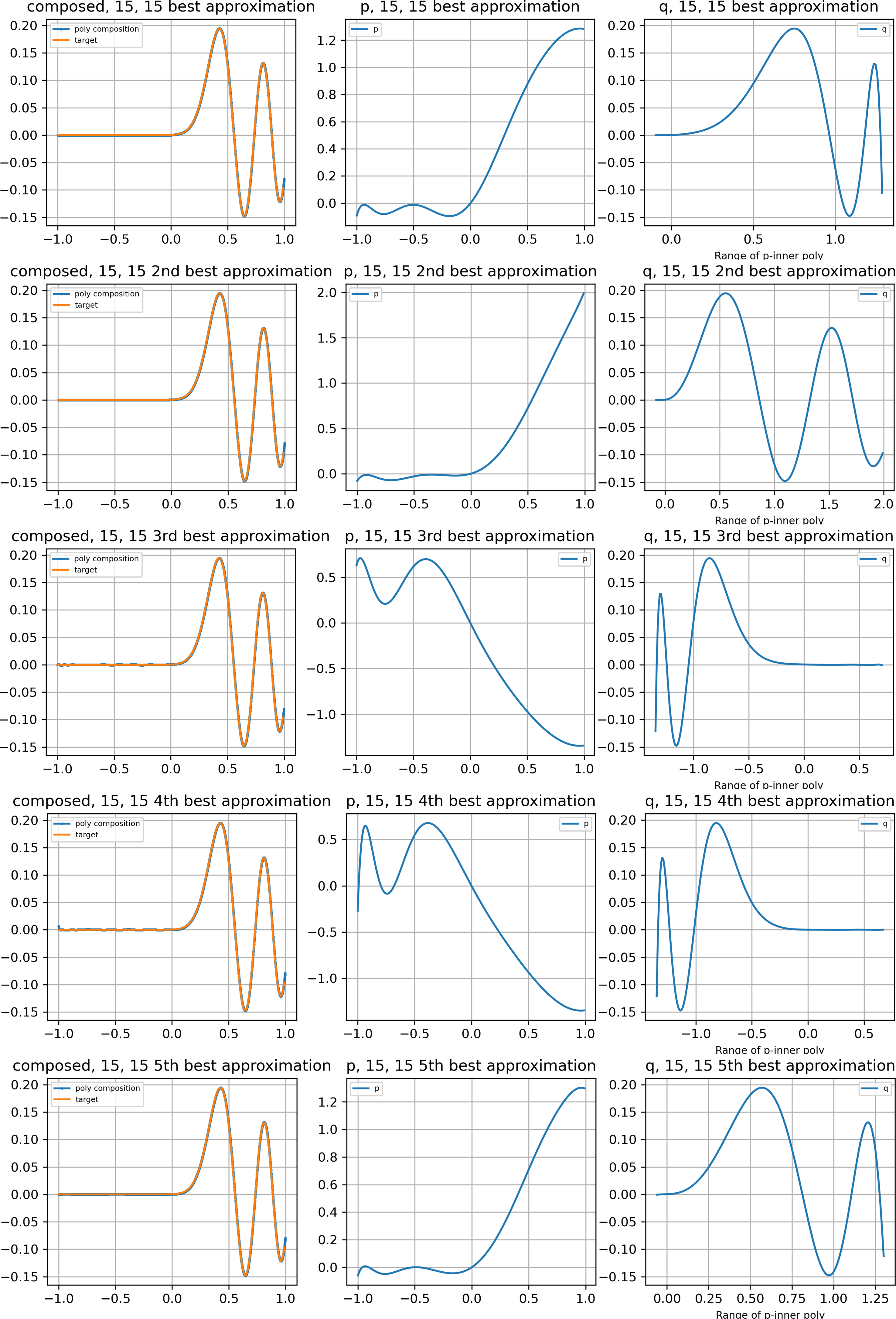}}
\end{center}
\caption{Best 5 polynomial approximation to $J_{40}{(30(x+1))}$ from $n=2000$ random trials} 
\label{fig:randomensemble}
\end{figure}

Figure \ref{fig:randomensemble} presents the $L_2$ errors of the top five polynomial approximations, ranked in ascending order as 1.209e-04 1.566e-04 3.121e-04 3.910e-04, and 3.915e-04. A closer examination reveals that some of these polynomial approximations, obtained from 2000 random trials, exhibit similarities (e.g., the 1st, 2nd, and 5th are alike, as are the 3rd and 4th). This suggests that local minima that yield good performance need not be concentrated in specific regions. Consequently, we need to explore the lost landscapes, solely employing techniques such as deflation, simulated annealing or basin-hopping without random initialization might not improve the results, which aligns with empirical observations.

\begin{figure}[H]
\begin{center}
	\fbox{\includegraphics[scale= 0.298]{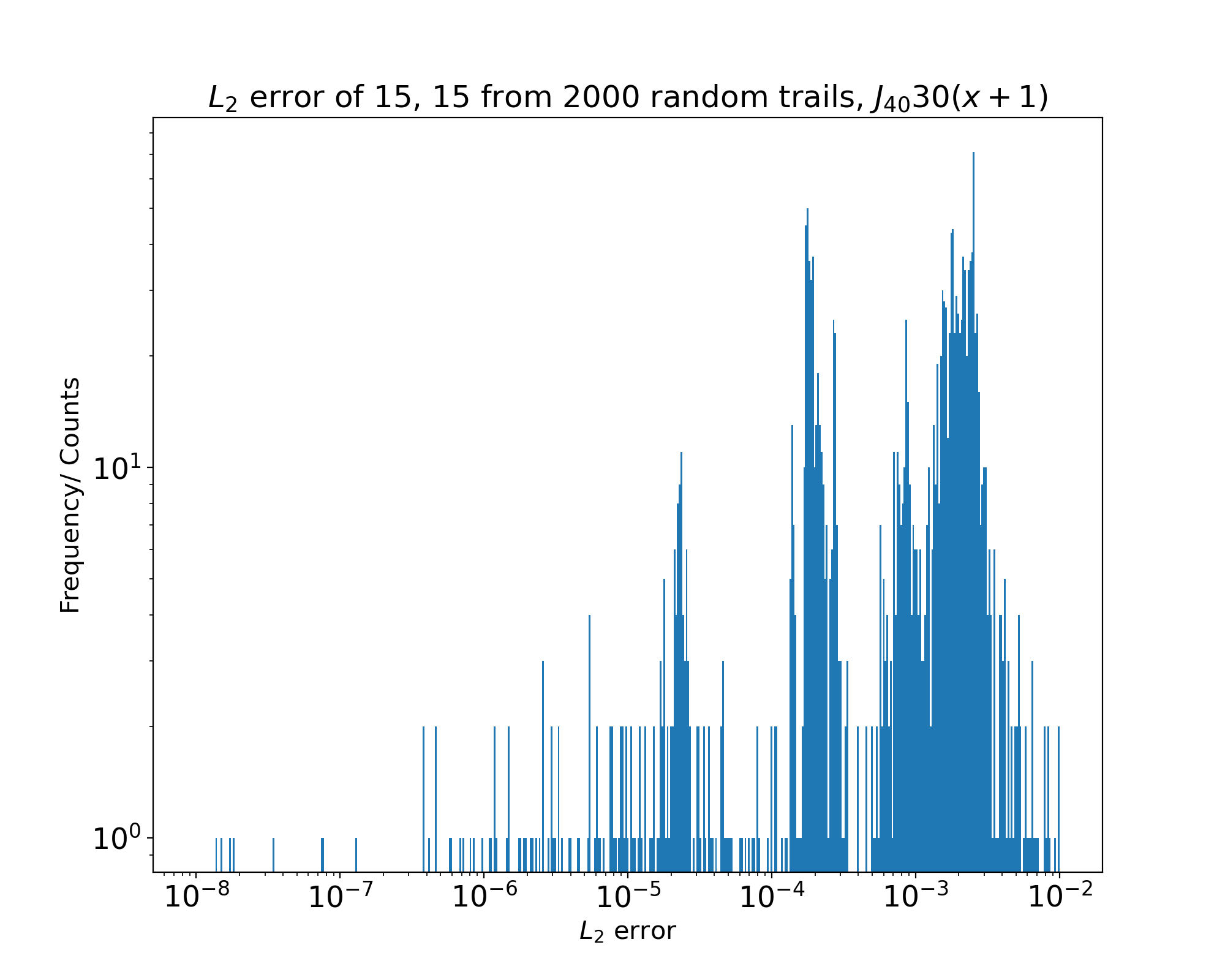}}
\end{center}
\caption{Histogram of $L_2$ errors of $J_{40}{(30(x+1))}$ from $n=2000$ random trials} 
\label{fig:hist}
\end{figure}

Figure \ref{fig:hist} strengthens our prior conjecture that finding the best optimizer is a rare occurrence. The presence of three distinct ``spikes" in the frequency counts suggests that the local minima corresponding to suboptimal approximations have comparatively larger basins of attraction.

\begin{figure}[H]
\begin{center}
	\fbox{\includegraphics[scale= 0.1]{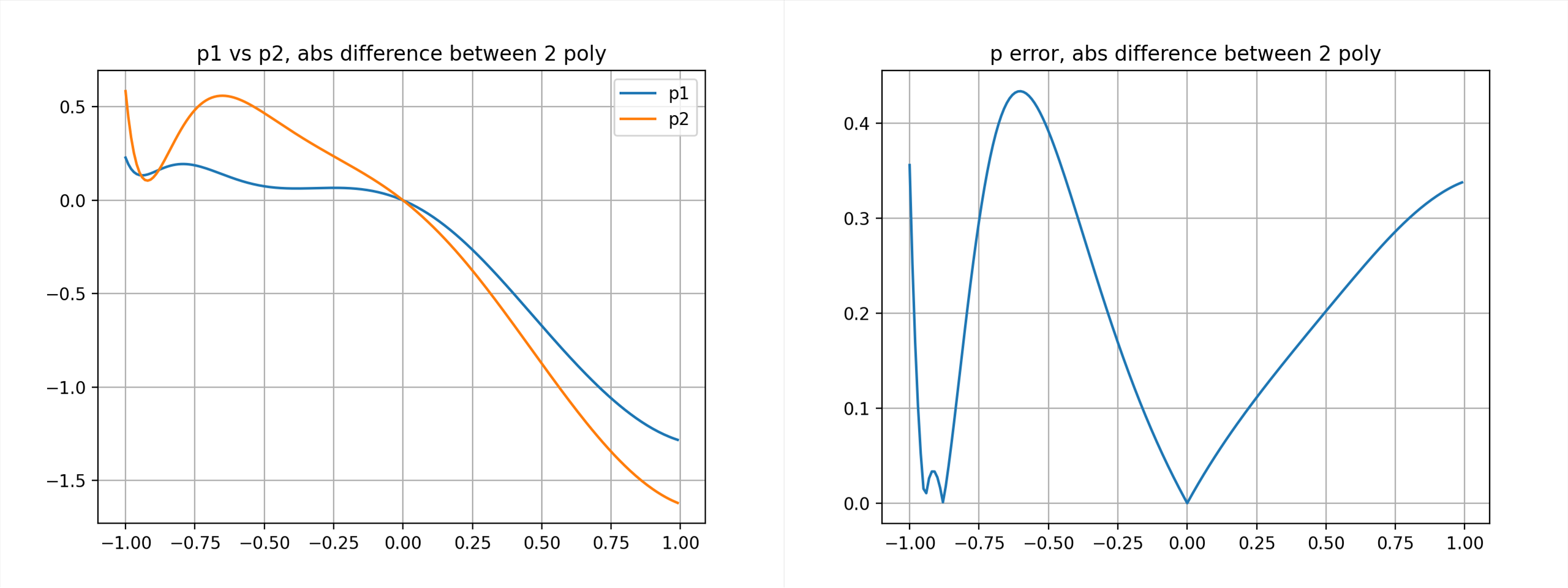}}
\end{center}
\caption{Best vs 2nd best p-(inner) to $J_{40}(30(x+1))$ from $n=10000$ random trials} 
\label{fig:optimalp}
\end{figure}

\begin{figure}[H]
\begin{center}
	\fbox{\includegraphics[scale= 0.1]{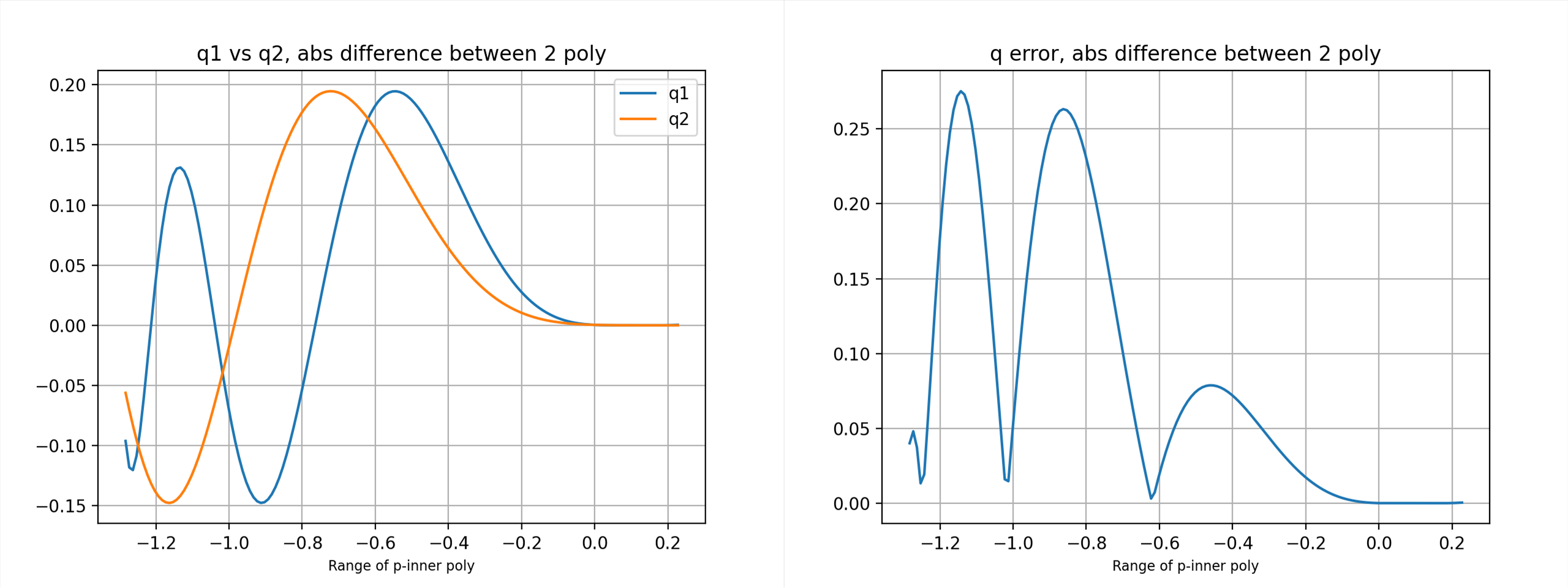}}
\end{center}
\caption{Best vs 2nd best q-(outer) to $J_{40}(30(x+1))$ from $n=10000$ random trials} 
\label{fig:optimalq}
\end{figure}

We illustrate the sensitivity of the optimization by taking the best and second best 2-layer {\it{deep}} approximation out of 10,000 random trials to $J_{40}(30(x+1))$, and observing their differences. The $L_2$ error of the best approximation is 3.424774e-05, and the $L_2$ error of the second-best approximation is 3.456158e-05. Their relative's difference is in the order of 9e-3.

When examining the individual inner and outer layers in Figure \ref{fig:optimalp}, \ref{fig:optimalq}, we note that we only use the range of $y=p(x)$ -inner as the domain of $q(y)$ -outer. Notably, the maximum absolute errors are approximately 0.4 and 0.3 respectively, each being 1000 times worse than the composite error. While the two best approximations differ pointwise, they share an overarching shape. The effects of small perturbation in the deep polynomial result in major differences in the individual polynomial layers can be seen by the chain rule. This sensitivity resembles the characteristics observed in neural networks, where the manifold illustrating the output of ReLU networks exhibits space-filling properties \cite{Devore}. These properties, enable the approximation of broader function classes using relatively few parameters but come at the expense of algorithmic stability \cite{Devore}. Thus, delving into the establishment of numerical stability during parameter selection becomes essential, specifically exploring how the optimization landscape is influenced by factors such as the target function, the number of deep layers, and the degrees of individual layers.

\section{Conformal maps as preconditioners} \label{sec:conformalmap}


\subsection{Convergence of polynomial interpolation}
The Runge phenomenon may arise when employing high-degree polynomial interpolation over equispaced nodes, resulting in oscillations or significant errors, particularly near endpoints. 

In general, it is the analyticity or lack thereof in the target function \( f \) around the domain of approximation that affects the rate convergence of an approximation. From potential theory the region of analyticity needed for polynomial interpolation is the region enclosed by the equipotential of $u(s) = \int_{-1}^1\log|x-\tau|d\mu(\tau)$, where $\mu(\tau)=1/2$ for equally spaced grids in [-1,1]. Which can be written in closed form as 

\begin{equation} \label{e:equi_pot}
    u_{equispaced}(s) = -1 + \frac{1}{2} \text{Re} \left[ (s+1) \log(s+1) - (s-1) \log(s-1) \right].
\end{equation}
We will denote the area enclosed by the equipotential curve of \eqref{e:equi_pot} that crosses [-1,1] as the Runge region. ``For the interpolants to a function \( f \) at equispaced nodes to converge as \( n \rightarrow \infty \) for all \( x \in [-1, 1] \), \( f \) must be analytic not only on \([-1, 1]\) but throughout the Runge region, which intersects the real axis at \( \pm 1 \) and the imaginary axis at \( \pm 0.52552491457 \ldots i \)"  \cite{tref}.

It is known that nodes that are clustered near the endpoints such as roots or extrema of orthogonal polynomials (i.e Chebyshev, Legendre, or Jacobi) are good for interpolation. The nodes that arise from orthogonal polynomials are asymptotically distributed to the {\it Chebyshev measure}, $\mu(\tau) = 1/(\pi \sqrt{1-\tau^2)}$. Moreover, polynomial interpolants to an analytic function on [-1, 1] converge exponentially when the nodes employed are from the Chebyshev measure \cite{tref}. The Chebyshev potential is given by

\begin{equation}
    u_{cheb}(s)= \int_{-1}^1 \frac{\log|s-x|}{\pi \sqrt{1-x^2}}dx = \log|s+i\sqrt{1-s^2}| - \log 2,
\end{equation}
which is, $-\log2$ on [-1, 1]. Thus the region of analyticity needed for convergence is just the region of interpolation, hence these nodes are effective against the Runge phenomenon. In the finite case, where we want to find $n+1$ points for polynomial interpolation, {\it Fekete, Leja} and {\it Fejer} points had been the topic of study. {\it Fekete} and its approximate {\it Leja} points are optimal in some energy sense, see \cite{tref} for definition and details. Nevertheless, it is with this perspective that we wanted to investigate whether a composition of polynomials (i.e. a polynomial map) could achieve similar performance with various numbers of interpolation points.


\subsection{An example conformal map: $(z+z^{3})/2$}

Our goal is to interpolate the Runge function,
\begin{equation}
	f(x)= \frac{1}{1+ax^2},
\end{equation}
with equispaced nodes after a conformal change of variable,
\begin{equation} \label{t1}
	x(z)= \frac{z+z^{3}}{2}
\end{equation}

The Runge function has poles at $\pm \frac{i}{\sqrt{a}}$. We want to show that $f(x(z))$ has poles in the $z$ coordinate that are outside of the Runge region. To find the pole locations, we use Cardano's formula by solving
\begin{equation} \label{e:x+x3map}
	1+a\left(\frac{z+z^3}{2}\right)^2 = 0
\end{equation}
for \(z\) after the conformal map \eqref{t1}.

We rewrite \eqref{e:x+x3map} as
\begin{equation} \label{e:x+x3fractored}
	\left[1+i\sqrt{a}\left(\frac{z+z^3}{2}\right)\right]\left[1-i\sqrt{a}\left(\frac{z+z^3}{2}\right)\right] = 0.
\end{equation}

Solving for the first bracket,
\begin{equation}
	1+i\sqrt{a}\left(\frac{z+z^3}{2}\right) = 0,
\end{equation}
we get
\begin{equation}
	z(z^2+1)= \frac{2}{\sqrt{a}} i.
\end{equation}

Assuming \(z=bi\),
\begin{equation}
	bi(1-b^2)= \frac{2}{\sqrt{a}} i \quad \Rightarrow \quad b(1-b)(1+b)=\frac{2}{\sqrt{a}},
\end{equation}
we obtain
\begin{equation}
	b^3 -b = -\frac{2}{\sqrt{a}}.
\end{equation}

Setting \(b=x\) and applying Cardano's formula, we let \(w\) be the unit root of \(x^3-1=0\) (i.e., \(w = e^{\frac{2\pi}{3}i}\)). We obtain three solutions from the first bracket of \eqref{e:x+x3fractored}:
\[
\begin{cases}
	x_k= \left( \sqrt[3]{\sqrt{\frac{1}{a}-\frac{1}{27}} -\sqrt{\frac{1}{a}}} + \frac{1}{3}\left(\sqrt{\frac{1}{a}-\frac{1}{27}} -\sqrt{\frac{1}{a}}\right)^{-\frac{1}{3}} \right)w^{k-1}, \\
	z_k= x_k \cdot i,
\end{cases}
\]

For \(a =25\), the calculation gives the pole \(|z_{k=1}|= 1.1597\ldots\), which is outside the Runge region of approximately 0.52. Therefore, equispaced interpolation in the \(z\) coordinate converges (see Figure \ref{fig:xplusx^3}). 

Figure \ref{fig:chebmap} compares the performance of this map with the Chebyshev map, \(x = \cos(z)\) on the Runge function.

\begin{figure}[H]
\begin{centering}
\captionsetup{format=myformat}
	\includegraphics[scale= 0.4]{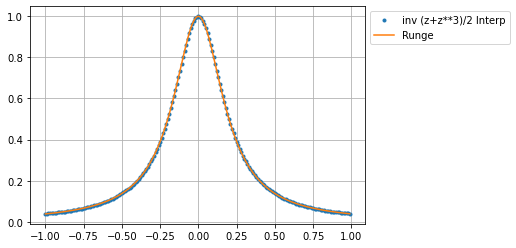}
\caption{$(z+z^3)/2$, $n=30$ interpolation points, $L_2$ error: 2.81e-04.}
	\label{fig:xplusx^3}
\end{centering}
\end{figure}

\begin{figure}[H]
\begin{centering}
\captionsetup{format=myformat}
	\includegraphics[scale= 0.286]{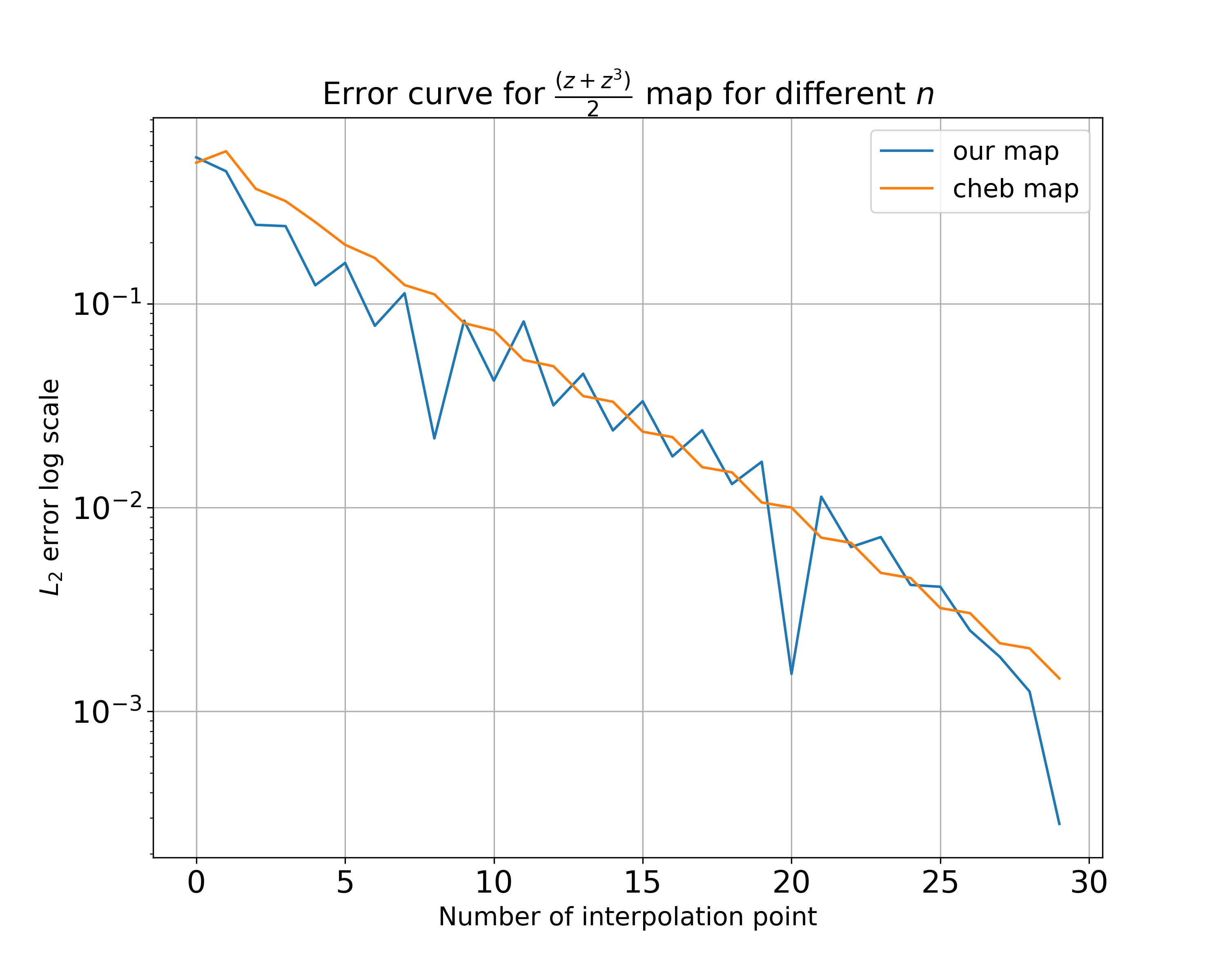}
\caption{Our map vs.\ Chebyshev for Runge function\\ $L_2$ error at n=30: 2.81e-04($\frac{z+z^3}{2}$), 1.45e-03(Cheb).}
	\label{fig:chebmap}
\end{centering}
\end{figure}


\section{Applications} \label{sec:applications}

Degree-optimal polynomial \cite{elias}, a subset of deep polynomial that optimized over the number of non-scaler multiplication is used for the evaluation of function of matrices. Degree-optimal polynomial is shown to outperform Pad\'{e} based technique and competative with current state-of-the-art methods for the square root \cite{elias} and logarithm \cite{elias24}.

\section*{Acknowledgments} 
The author expresses his sincere gratitude to Jonathan Goodman for his mentorship during the NYU SURE program and acknowledges the SURE program for partial financial support during the summer of 2022. The author is also deeply grateful to Nick Trefethen for his detailed feedback on the draft, insightful discussions on the approximation of the absolute value function, and for bringing the work of Gawlik and Nakatsukasa to his attention.


\end{document}